\newtheorem{theorem}{Theorem}[section]
\newtheorem{lemma}[theorem]{Lemma}
\theoremstyle{definition}
\theoremstyle{remark}
\numberwithin{equation}{section}
\begin{document}

\baselineskip=16pt

\title[Weighted stationary phase]
{Weighted stationary phase of higher orders}

\author{Mark McKee \and Haiwei Sun \and Yangbo Ye}

\address{Mark McKee ${}^1$: mark-mckee@uiowa.edu}

\address{Haiwei Sun $\dagger\,{}^{2}$: hwsun@sdu.edu.cn}

\address{Yangbo Ye ${}^{1,2}$: yangbo-ye@uiowa.edu}

\address{${}^1$
Department of Mathematics, The University of Iowa,
Iowa City, Iowa 52242-1419, United States}

\address{${}^2$
School of Mathematics and Statistics, Shandong University,
Weihai, Shandong 264209, China}

\address{$\dagger$
Corresponding author}

\subjclass[2010]{41A60}

\keywords{
first derivative test;
weighted stationary phase}

\begin{abstract}
The subject matter of this paper is an integral with
exponential oscillation of phase $f(x)$ weighted by $g(x)$ 
on a finite interval $[\alpha,\beta]$. When the phase $f(x)$ 
has a single stationary point in $(\alpha,\beta)$, an 
$n$th-order asymptotic expansion of this integral is proved 
for $n\geq2$. This asymptotic expansion sharpens the classical 
result for $n=1$ by M.N. Huxley. A similar asymptotic expansion 
was proved by Blomer, Khan and Young under the assumptions 
that $f(x)$ and $g(x)$ are smooth and $g(x)$ is compactly 
supported on $\mathbb R$. In the present paper, however, 
these functions are only assumed to be continuously 
differentiable on $[\alpha,\beta]$ $2n+3$ and $2n+1$ times, 
respectively. Because there are no requirements on 
the vanishing of $g(x)$ and its derivatives at the 
endpoints $\alpha$ and $\beta$, the present asymptotic 
expansion contains explicit boundary terms in the main and 
error terms. The asymptotic expansion in this paper 
is thus applicable to a wider class of problems in 
analysis, analytic number theory and other fields.
\end{abstract}

\maketitle

\section{Introduction}

In this paper we will consider exponential integrals
of the form
\begin{eqnarray} \label{Ige(f)}
&&
\int_{\alpha}^{\beta} g(x)e(f(x))~dx.
\end{eqnarray}
When $f'(x)$ changes signs
at a point $x=\gamma$ with $\alpha<\gamma<\beta$,
Huxley \cite{Hxly} obtained a first-order asymptotic
expansion of \eqref{Ige(f)}. This asymptotic
expansion has been widely used as a standard technique in
analytic number theory.
This integral also plays an important role in harmonic 
analysis. In the case of $\alpha=-\infty$ and 
$\beta=\infty$, Walff \cite{Wlff}, pp.38--39, proved an 
$n$th order asymptotic expansion of \eqref{Ige(f)}. 
Blomer, Khan and Young \cite{BlmKhnYng} reproved such an
asymptotic expansion and computed the main terms. 

What we will do in the present paper is to further 
refine the asymptotic expansion of \eqref{Ige(f)} in two 
aspects. Firstly we will consider the case of finite 
lower and upper limits in \eqref{Ige(f)} with 
$g(x)$ and its derivatives being not necessarily zeros 
at the endpoints of the integration interval, as in 
\cite{Hxly}. This will bring in boundary terms which will 
appear both in the main terms and the error terms. 
Detailed treatment of these boundary terms is lengthy, 
but they are necessary for a wider class of applications. 
Secondly the functions $f(x)$ and $g(x)$ will not be assumed 
to be $C^\infty$, as opposite to \cite{Wlff} and 
\cite{BlmKhnYng}. 

Now let us have an overview of the stationary phase 
expansion we will prove (Theorem \ref{WSPI}):
\begin{eqnarray*}
\int_{\alpha}^{\beta}g(x)e( f(x))~dx
&=&
\frac{e(f(\gamma)\pm1/8)}{\sqrt{|f''(\gamma)|}}
\Big(g(\gamma)+\sum_{j=1}^{n}\varpi_{2j}\frac{(-1)^{j}(2j-1)!!}{(2\pi i f''(\gamma))^j}
\Big)
\\
&&
+  \text{Boundary terms} + \text{Error terms}.
\end{eqnarray*}
Here $n$ is related to the smoothness of $f$ and
$g$, $\gamma$ is the only zero of $f'(x)$ in $(\alpha,\beta)$, and $\varpi_{2j}$ are given by \eqref{varpi-5}. 
Possible applications of our results include Salazar and
Ye \cite{SlzYe} on spectral square moments of
$$
S_X(f;\alpha,\beta)
= \sum_n \lambda_f(n)  e(\alpha n^\beta)\phi\Big(\frac{n}{X}\Big)
$$
for $0<\beta<1$, $\alpha \in \mathbb R^\times$,
$\phi \in C_c^\infty ((1,2))$, and $f$ being a Maass form for
$\Gamma_0(N)$, and McKee, Sun and Ye \cite{McKSunYe} on
an improved subconvexity bound for a Rankin-Selberg
$L$-function for $SL_2(\mathbb Z)$ and
$SL_3(\mathbb Z)$ Maass forms.

Our first theorem is a weighted first derivative
test, which strengthens Lemma 5.5.5 of \cite{Hxly},
p.113, with more boundary terms and smaller error terms. 
Similar theorems have been proved and used by 
Jutila and Motohashi \cite{JtlMth} (Lemma 6) and 
Blomer, Khan and Young \cite{BlmKhnYng} (Lemma 8.1). 
We will thus not give its proof here but note that 
our version is on a finite integration interval and comes 
with boundary terms. We will also need the specific form 
of error terms later. 

\begin{theorem}\label{WFDT}
Let $f(x)$ be a real-valued function, $n+2$ times continuously
differentiable for $\alpha\leq x\leq\beta$, and let $g(x)$ be
a real-valued function, $n+1$ times continuously differentiable
for $\alpha\leq x\leq\beta$. Suppose that there are positive parameters $M$,
$N$, $T$, $U$, with $M\geq\beta-\alpha$, and positive constants $C_r$ such that for $\alpha\leq x\leq\beta$,
\begin{equation*}
|f^{(r)}(x)|\leq C_r\frac{T}{M^r},\ |g^{(s)}(x)|\leq C_s\frac{U}{N^s},
\end{equation*}
for $r=2,\ldots,n+2$, and $s=0,\ldots,n+1$. If $f^\prime(x)$ and $f^{\prime\prime}(x)$ do not change signs on the interval $[\alpha,\beta]$,
we have
\begin{eqnarray*}
\int_{\alpha}^{\beta}g(x)e(f(x))dx&=&\Big[e(f(x))\sum_{i=1}^{n}H_{i}(x)\Big]_{\alpha}^{\beta}
\nonumber
+\label{error1}
O\Big(\frac{M}{N}\sum_{j=1}^{[n/2]}\frac{UT^j
}{\min|f^\prime|^{n+j+1}M^{2j}}\sum_{t=j}^{n-j}\frac{1}{N^{n-j-t}M^t}\Big)
\\
&&+\label{error2}
O\Big(\Big(\frac{M}{N}+1\Big)\frac{U}{N^{n}\min{|f^\prime|^{n+1}}}\Big)
+\label{error3}
O\Big(\sum_{j=1}^{n}\frac{UT^j}{\min|f^\prime|^{n+j+1}M^{2j}}\sum_{t=0}^{n-j}\frac{1}{N^{n-j-t}M^{t}}\Big),
\end{eqnarray*}
where
\begin{equation}\label{the2-1}
H_1(x)=\frac{g(x)}{2\pi if^\prime(x)},\ H_i(x)=-\frac{H_{i-1}^\prime(x)}{2\pi if^\prime(x)}
\end{equation}
for $i=2,\ldots,n$.
\end{theorem}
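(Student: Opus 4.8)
The plan is to prove Theorem~\ref{WFDT} by repeated integration by parts, tracking every boundary contribution and the exact size of the leftover integral. The one extra device needed for the error terms is that $f'$ is monotone on $[\alpha,\beta]$, since $f''$ does not change sign, so that $u=f'(x)$ is an admissible change of variable.

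First I would set up the iteration. Since $f'$ has no zero on $[\alpha,\beta]$, write $e(f(x))=\frac{1}{2\pi i f'(x)}\frac{d}{dx}e(f(x))$ and integrate by parts once to get
\[
\int_{\alpha}^{\beta}g(x)e(f(x))\,dx=\bigl[H_1(x)e(f(x))\bigr]_{\alpha}^{\beta}-\int_{\alpha}^{\beta}H_1'(x)e(f(x))\,dx .
\]
The recursion \eqref{the2-1} is precisely the identity $H_i'(x)=-2\pi i f'(x)H_{i+1}(x)$ for $1\le i\le n-1$, so after writing $H_i'e(f)=\frac{H_i'}{2\pi i f'}\frac{d}{dx}e(f)$ the leftover integral can be integrated by parts again. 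Iterating $n$ times---legitimate because the smoothness hypotheses make $H_1,\dots,H_n$ continuously differentiable---yields
\[
\int_{\alpha}^{\beta}g(x)e(f(x))\,dx=\Bigl[e(f(x))\sum_{i=1}^{n}H_i(x)\Bigr]_{\alpha}^{\beta}-\int_{\alpha}^{\beta}H_n'(x)e(f(x))\,dx ,
\]
which already produces the displayed boundary term, so everything remaining is the estimate of the remainder $R:=-\int_{\alpha}^{\beta}H_n'(x)e(f(x))\,dx$.

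Next I would record the shape of $H_n$. A straightforward induction on \eqref{the2-1} shows that $H_n(x)$ is a finite linear combination, with harmless constant coefficients, of terms $g^{(a)}(x)f^{(b_1)}(x)\cdots f^{(b_k)}(x)/(f'(x))^{n+k}$ with $b_1,\dots,b_k\ge 2$, $0\le k\le n-1$, and $a+\sum_\ell(b_\ell-1)=n-1$; hence $H_n'(x)$ is such a combination with $k$ up to $n$ and the last relation replaced by $a+\sum_\ell(b_\ell-1)=n$, so that $\sum_\ell b_\ell=n-a+k$. Feeding the hypotheses $|f^{(r)}|\le C_rT/M^r$, $|g^{(s)}|\le C_sU/N^s$ and $|f'|\ge\min|f'|$ into a single term bounds it by $\ll UT^{k}/\bigl(N^{a}M^{\,n-a+k}\min|f'|^{\,n+k}\bigr)$, which with $j:=k$ and $t:=n-a-j$ reads $\ll UT^{j}/\bigl(N^{n-j-t}M^{\,2j+t}\min|f'|^{\,n+j}\bigr)$.

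Finally I would estimate the remainder by $|R|\le\int_{\alpha}^{\beta}|H_n'|$, treating $H_n'$ term by term. A term carrying a factor $f''$ is integrated through the substitution $u=f'$, which converts $\int|f''|^{k}/|f'|^{n+k}$ into $\ll T^{k-1}/\bigl(M^{2k-2}\min|f'|^{n+k-1}\bigr)$ and so supplies the extra power of $\min|f'|$; a term with no $f''$ factor, in particular the leading term $\propto g^{(n)}/(f')^{n}$, is integrated by parts once more via $e(f)=\frac{1}{2\pi i f'}\frac{d}{dx}e(f)$, for which the extra order of smoothness ($f\in C^{n+2}$, $g\in C^{n+1}$) is exactly what is available; the new boundary term and new integral are again of one of these two kinds, and the process terminates because the smoothness is just enough. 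The boundary contributions that arise feed the $\frac{U}{N^{n}\min|f'|^{n+1}}$ part of the second $O$-term (from $j=0$) and the third $O$-term (from $j\ge1$); the remaining integrals feed the $\frac{M}{N}$ part of the second $O$-term (from $j=0$) and the first $O$-term (from $j\ge1$); and reindexing the multi-index sums over $(a;b_1,\dots,b_k)$ as double sums over $(j,t)$ recovers the three displayed $O$-terms, with $H_1,\dots,H_n$ as in \eqref{the2-1}. The main difficulty is this bookkeeping, not the analysis: establishing the structural formula for $H_n$ with the correct index constraints, routing each piece of $H_n'$ correctly between substitution and a further integration by parts so that nothing is left with too little decay while never exceeding the available smoothness, and collapsing the resulting multi-index sums into the stated $(j,t)$ ranges without degrading the powers of $N$, $M$ and $\min|f'|$---all the while keeping $H_1,\dots,H_n$ exact and relegating every later boundary contribution to the error.
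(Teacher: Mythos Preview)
The paper does not actually prove Theorem~\ref{WFDT}. The authors state explicitly, just before the theorem, that they ``will thus not give its proof here,'' referring instead to the analogous results of Huxley (Lemma~5.5.5), Jutila--Motohashi (Lemma~6), and Blomer--Khan--Young (Lemma~8.1), and remarking only that their version lives on a finite interval and therefore carries explicit boundary terms. So there is no paper-proof to compare against.

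That said, your approach is the standard one behind all of the cited results and is sound in outline: $n$-fold integration by parts produces the boundary sum $\sum_{i=1}^n H_i$ together with the remainder $-\int H_n'(x)e(f(x))\,dx$; your structural description of $H_n$ and $H_n'$ as linear combinations of $g^{(a)}\prod_\ell f^{(b_\ell)}/(f')^{n+k}$ with $b_\ell\ge 2$ and $a+\sum_\ell(b_\ell-1)=n-1$ (resp.\ $=n$) is correct and is exactly what the induction on \eqref{the2-1} gives; and the device of trading one factor of $f''$ for an extra power of $\min|f'|$ via the monotone substitution $u=f'(x)$, while handling the $f''$-free terms by one further integration by parts (which the hypotheses $f\in C^{n+2}$, $g\in C^{n+1}$ just barely permit), is the right way to upgrade $\min|f'|^{n+j}$ to $\min|f'|^{n+j+1}$. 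The bookkeeping you flag as the main difficulty is indeed the entire content: in particular, the restricted range $1\le j\le\lfloor n/2\rfloor$, $j\le t\le n-j$ in the first $O$-term and the split of the $j=0$ contribution into the two halves $(M/N+1)$ of the second $O$-term should emerge once you separate, after the extra integration by parts, those pieces of $H_{n+1}'$ that still carry no $f''$ factor (and so are estimated by length $\le M$) from those that do (and so admit the substitution). Your proposal is the expected argument.
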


If $g(x)\equiv1$ on $[\alpha,\beta]$, we may take $U=1$ and $N$
arbitrarily large. Then the first two error terms in Theorem \ref{WFDT} are negligible, while in the
third error term we may take only one term with
$t=n-j$ in the inner sum. This way we can get an explicit first
derivative test, which supersedes Lemma 5.5.1 of \cite{Hxly},
p.104, with more boundary terms and smaller error terms.

Our next theorem is an $n$th-order asymptotic expansion
of a weighted stationary phase integral.

\begin{theorem}\label{WSPI}
Let $f(x)$ be a real-valued function, $2n+3$ times continuously
differentiable for $\alpha\leq x\leq\beta$, and $g(x)$ a
real-valued
function, $2n+1$ times continuously differentiable for $\alpha\leq x\leq\beta$.
Let  $H_k(x)$ be defined as in \eqref{the2-1}.
Assume that there are positive parameters $M$,
$N$, $T$, $U$ with
\begin{equation}\label{M>b-a}
M\geq \beta-\alpha,
\end{equation}
and positive constants $C_r$ such that for $\alpha\leq x\leq\beta$,
\begin{equation}\label{U-f'}
|f^{(r)}(x)|\leq C_r\frac{T}{M^r},\ \text{for}\ r=2,\ldots,2n+3,
\end{equation}
\begin{equation}\label{L-f''}
f^{\prime\prime}(x)\geq \frac{T}{C_2M^2}
\end{equation}
and
\begin{equation}\label{U-g}
|g^{(s)}(x)|\leq C_s\frac{U}{N^s},\ \text{for}\ s=0,\ldots,2n+1.
\end{equation}
Suppose that $f'(x)$ changes signs only at $x=\gamma$, from negative to positive, with $\alpha<\gamma<\beta$. 
Let
\begin{equation}\label{Deltamin}
\Delta=
\min\Big\{\frac{\log2}{C_2},\frac{1}{C_2^2\max\limits_{2\leq k\leq 2n+3}\{C_k\}}\Big\}.
\end{equation}
If $T$ is sufficiently large satisfying $T^{\frac{1}{2n+3}}\Delta>1$, we have for $n\geq2$ that
\begin{eqnarray}\label{thm3}
&&
\int_{\alpha}^{\beta}g(x)e(f(x))dx
\\
&=&
\frac{e\Big(f(\gamma)+\frac{1}{8}\Big)}{\sqrt{f''(\gamma)}}\Big(g(\gamma)+\sum_{j=1}^{n}\varpi_{2j}\frac{(-1)^{j}(2j-1)!!}{(4\pi i\lambda_2)^j}
\Big)
+\Big[e(f(x)) \sum_{i=1}^{n+1}H_{i}(x)\Big]_{\alpha}^{\beta}\nonumber
\\
&&+
O\Big(\frac{UM^{2n+5}}{T^{n+2}N^{n+2}}\Big(\frac{1}{(\gamma-\alpha)^{n+2}}+\frac{1}{(\beta-\gamma)^{n+2}}\Big)\Big)\nonumber
+
O\Big(\frac{UM^{2n+4}}{T^{n+2}}
\Big(\frac{1}{(\gamma-\alpha)^{2n+3}}+\frac{1}{(\beta-\gamma)^{2n+3}}\Big)\Big)\nonumber
\\
&&+O\Big(\frac{UM^{2n+4}}{T^{n+2}N^{2n}}\Big(\frac{1}{(\gamma-\alpha)^{3}}+\frac{1}{(\beta-\gamma)^{3}}\Big)\Big)+
O\Big(\frac{U}{T^{n+1}}\Big(\frac{M^{2n+2}}{N^{2n+1}}+M\Big)\Big)
\nonumber
\end{eqnarray}
where
\begin{align}\label{lam}
\lambda_k=\frac{f^{(k)}(\gamma)}{k!}\ \text{for}\ k=2,\ldots,2n+2,
\end{align}
\begin{align}\label{eta2}
\eta_k=\frac{g^{(k)}(\gamma)}{k!}\ \text{for}\ k=0,\ldots,2n,
\end{align}
and
\begin{equation}\label{varpi-5}
\varpi_k
=\eta_k
+
\sum_{\ell=0}^{k-1}
\eta_\ell
\sum_{j=1}^{k-\ell}
\frac{C_{k\ell j}}{\lambda_{2}^j}
\sum_{{\mbox{\tiny$\begin{array}{c}
3\leq n_1,\ldots,n_j\leq 2n+3\\n_1+\cdots+n_j=k-\ell+2j \end{array}$}}}\lambda_{n_1}\cdots\lambda_{n_j},
\end{equation}
with $C_{k\ell j}$ being some constant coefficients.
\end{theorem}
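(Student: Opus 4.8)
The plan is to exploit the convexity of $f$ granted by \eqref{L-f''} to straighten the phase on all of $[\alpha,\beta]$. Since $f''(x)\ge T/(C_2M^2)>0$ throughout, $f'$ is strictly increasing with its unique zero at $\gamma$ and $f$ has a strict minimum there; writing $\lambda_2=f''(\gamma)/2>0$, Taylor's formula with integral remainder gives $f(x)-f(\gamma)=\lambda_2(x-\gamma)^2\psi(x)$ with $\psi(x)=\lambda_2^{-1}\int_0^1(1-t)f''(\gamma+t(x-\gamma))\,dt$, which by \eqref{U-f'} is $(2n+1)$ times continuously differentiable on $[\alpha,\beta]$ with $\psi(\gamma)=1$ and with $\psi$ bounded above and below by constants depending only on the $C_r$. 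Hence $u=u(x):=(x-\gamma)\sqrt{\psi(x)}$ is a $C^{2n+1}$ strictly increasing change of variable on $[\alpha,\beta]$ (monotonicity being automatic, since $2\psi+(x-\gamma)\psi'=f'(x)/(\lambda_2(x-\gamma))>0$ for $x\ne\gamma$) with $u(\gamma)=0$, $u'(\gamma)=1$, $f(x)=f(\gamma)+\lambda_2u(x)^2$, $|u(\alpha)|\asymp\gamma-\alpha$, $u(\beta)\asymp\beta-\gamma$, and with $x(u)$, $x'(u)$ and their derivatives up to the needed order quantitatively under control (this is where \eqref{Deltamin} and $T^{1/(2n+3)}\Delta>1$ enter). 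After this substitution,
\[
\int_\alpha^\beta g(x)e(f(x))\,dx=e(f(\gamma))\int_{u(\alpha)}^{u(\beta)}G(u)\,e(\lambda_2u^2)\,du,\qquad G(u):=g(x(u))\,x'(u),
\]
with $G$ being $2n$ times continuously differentiable on $[u(\alpha),u(\beta)]$.

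I would then expand the amplitude at the stationary point, $G(u)=\sum_{k=0}^{2n}\varpi_ku^k+\rho(u)$ with $\varpi_k=G^{(k)}(0)/k!$ and $\rho$ the Taylor remainder; computing $G^{(k)}(0)$ by the chain and product rules (Fa\`a di Bruno for $g\circ x$ together with the Lagrange-inversion expansion of $x(u)$, whose coefficients are polynomials in $\lambda_3,\dots,\lambda_{2n+3}$ over powers of $\lambda_2$) reproduces formula \eqref{varpi-5}, with $C_{k\ell j}$ the combinatorial coefficients that appear and $\varpi_0=g(\gamma)$. The main term comes from completing the polynomial part to the whole line as an oscillatory integral: differentiating the Fresnel integral $\int_{-\infty}^\infty e(\lambda u^2)\,du=e(1/8)/\sqrt{2\lambda}$ repeatedly in $\lambda$ gives $\int_{-\infty}^\infty u^{2j}e(\lambda_2u^2)\,du=\frac{e(1/8)}{\sqrt{f''(\gamma)}}\frac{(-1)^j(2j-1)!!}{(4\pi i\lambda_2)^j}$, while odd powers integrate to zero, so summing over $j$ and using $e(f(\gamma))e(1/8)=e(f(\gamma)+1/8)$ yields exactly $\dfrac{e(f(\gamma)+1/8)}{\sqrt{f''(\gamma)}}\big(g(\gamma)+\sum_{j=1}^n\varpi_{2j}(-1)^j(2j-1)!!(4\pi i\lambda_2)^{-j}\big)$.

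It then remains to estimate $\int_{u(\alpha)}^{u(\beta)}\rho\,e(\lambda_2u^2)\,du$ minus the two tails $\int_{(-\infty,u(\alpha))\cup(u(\beta),\infty)}P\,e(\lambda_2u^2)\,du$, where $P=\sum_{k\le2n}\varpi_ku^k$. Both are handled by repeated integration by parts against $e(\lambda_2u^2)$ — legitimate on the tails because the phase derivative is bounded away from $0$ there, and for the $\rho$-integral because $\rho$ vanishes to order $2n\ge4$ at the origin (this is the one place $n\ge2$ is really needed), so $\rho(u)/u$ and a few of its derivatives stay bounded; throughout one works with oscillatory (regularized) integrals, since $\int u^ke(\lambda_2u^2)$ diverges for $k\ge3$. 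Done in parallel, the boundary contributions at $u(\alpha)$ and $u(\beta)$ combine at each step: using $x'(u)=2\lambda_2u/f'(x(u))$ and $e(\lambda_2u(\beta)^2)e(f(\gamma))=e(f(\beta))$, the first integration by parts yields $\big[e(f(x))H_1(x)\big]_\alpha^\beta$, the next $\big[e(f(x))H_2(x)\big]_\alpha^\beta$, and so on, so that after $n+1$ steps one has recovered precisely $\big[e(f(x))\sum_{i=1}^{n+1}H_i(x)\big]_\alpha^\beta$ with $H_i$ as in \eqref{the2-1}. The leftover fully integrated-by-parts integrals and the final $\rho$-integral then give, after inserting $\lambda_2\asymp T/M^2$, $|u(\alpha)|\asymp\gamma-\alpha$ and $u(\beta)\asymp\beta-\gamma$, the four $O$-terms.

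The main obstacle is not this bookkeeping in isolation but two points forced by the finite smoothness and the finite interval. First, one must show that the Morse-type substitution $x\mapsto u$ really is $C^{2n+1}$ with $x(u)$, $x'(u)$ and the two-sided comparisons $|u(\alpha)|\asymp\gamma-\alpha$, $u(\beta)\asymp\beta-\gamma$ all quantitatively controlled in terms of the $C_r$, $M$, $T$ alone, carefully tracking how many derivatives survive the division by $(x-\gamma)^2$ and the square root — this is exactly what fixes the smoothness requirements at $2n+3$ for $f$ and $2n+1$ for $g$. Second, one must carry the integration by parts far enough and verify that the accumulated boundary terms coincide on the nose with $\big[e(f(x))\sum_{i=1}^{n+1}H_i(x)\big]_\alpha^\beta$ while keeping every discarded piece inside the stated error budget. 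The verification of \eqref{varpi-5} from the expansion of $G$ is routine but lengthy, and is where the $C_{k\ell j}$ are defined.
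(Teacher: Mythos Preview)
Your approach is valid in outline but genuinely different from the paper's. The paper does \emph{not} perform the Morse change of variable on all of $[\alpha,\beta]$; instead it splits the interval as $[\alpha,u]\cup[u,v]\cup[v,\beta]$ with $[u,v]$ a small neighborhood of $\gamma$ (of radius $\asymp\min\{\gamma-\alpha,\beta-\gamma,MT^{-1/(2n+3)}\}$), applies Theorem~\ref{WFDT} (the weighted first-derivative test) on the two outer pieces to produce boundary terms at $\alpha,u,v,\beta$, and only on $[u,v]$ makes the substitution $f(x)-f(\gamma)=\lambda_2y^2$. The restriction to a small interval is essential for the paper's technique: its lemmas expand $dx/dy$, $g(x)$, $f^{(i)}(x)$ as truncated power series in $x-\gamma$ whose control (inequality \eqref{lessthan1}) requires $|x-\gamma|\le C_2\Delta M$. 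Your integral-remainder construction $\psi(x)=\lambda_2^{-1}\int_0^1(1-t)f''(\gamma+t(x-\gamma))\,dt$ sidesteps this and legitimately globalizes the substitution; note incidentally that the identity $u'(x)=h(x)/(2\lambda_2\sqrt{\psi(x)})$ with $h(x)=\int_0^1f''(\gamma+t(x-\gamma))\,dt$ gives $u'\in C^{2n+1}$, hence $x'(u)\in C^{2n+1}$ and $G\in C^{2n+1}$, one degree better than you state.

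There are two further differences. First, on the finite interval $[-r,r]$ the paper evaluates $\int_{-r}^ry^{2j}e(\lambda_2y^2)\,dy$ by reducing via integration by parts to the probability integral and its classical asymptotic expansion, rather than completing to $\mathbb{R}$ in a regularized sense. Second, for the remainder $Q$ the paper does \emph{not} integrate by parts across $y=0$: it excises $[-\delta,\delta]$ with $\delta\asymp\lambda_2^{-1/2}$, bounds that piece by the second-derivative test, and on each dyadic block $[t,2t]\subset[\delta,r]$ performs the $n+1$ integrations by parts followed by a first-derivative test on the last integral. This matters for your plan: carrying $n+1$ integrations by parts directly across $u=0$ on $\rho(u)e(\lambda_2u^2)$ requires $\psi_{n+1}$ to be $C^1$ there, whereas from $G\in C^{2n+1}$ one only gets $\psi_k\in C^{2n+2-2k}$, i.e.\ $\psi_{n+1}\in C^0$ --- so some device (excision near $0$, or a sharper regularity argument) is still needed at that step.

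What each approach buys: yours is conceptually cleaner (one change of variable, no call to Theorem~\ref{WFDT}, boundary bookkeeping done in one sweep); the paper's is more elementary (no oscillatory-integral regularization, every integral absolutely convergent or over a finite interval), and the three-piece decomposition makes the provenance of each of the four $O$-terms explicit. Both routes ultimately rest on the same identity $\theta_i(y)=H_i(x)$ (the paper's \eqref{N-T-6}) to collapse the boundary terms into $\big[e(f(x))\sum_{i=1}^{n+1}H_i(x)\big]_\alpha^\beta$, which you correctly identify.
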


{\it Remark 1.}  Theorem 1.2 also holds when $f'(x)$ changes 
signs from positive to negative, by changing the sign of $1/8$ 
and taking the absolute value 
of $f''(\gamma)$ inside the radical sign on the right hand side 
of \eqref{thm3}. 

{\it Remark 2.} Blomer, Khan and Young's Proposition 8.2 
and Corollary 8.3 in \cite{BlmKhnYng} obtained the 
same main terms and the last big-$O$ term as in 
\eqref{thm3}, under the assymptions that $f(x)$ and $g(x)$ 
are smooth and $g(x)$ is compactly supported on $\mathbb R$. 
In Theorem \ref{WSPI}, however, $f(x)$ and $g(x)$ are only 
assumed to be continuously differentiable on $[\alpha,\beta]$ 
$2n+3$ and $2n+1$ times, respectively. Because there are 
no requirements that $g(x)$ and its derivatives vanish at 
the endpoints $\alpha$ and $\beta$, Theorem \ref{WSPI} 
is valid for a much wider class of functions $f(x)$ and $g(x)$. 
This is indeed the case in \cite{SlzYe}. 

{\it Remark 3.} 
In \cite{BlmKhnYng} 
the parameters require a condition that $N\leq\beta-\alpha$. 
In Theorem \ref{WSPI} \eqref{M>b-a} is assumed instead, which 
is the same as assumed in Huxley \cite{Hxly}. 

We end this introduction with an outline of the proof. 
In \S2 we divide the integration interval into three parts: $[\alpha, u]$, $[u,v]$ and $[v,\beta]$. In the middle 
subinterval we change variables from $x$ to $y$ by  
$f(x) - f(\gamma) = \lambda_2 y^2$. 
The goal is to obtain a Taylor approximation (in $y$) for 
$g(x) \frac{dx}{dy}$ and its $y$ derivatives.  
The Taylor approximation (without error) to $g(x) \frac{dx}{dy}$ is given by $\sum_{k=0}^{2n} \varpi_{k} y^k$, 
where $\varpi_k$ is given by (\ref{varpi-5}) above. 

In \S3, we estimate the main weighted stationary phase integral. 
The main term comes from integrating the middle range in $y$.
$x \in [u,v]$ corresponds to $y \in [-r,r]$ for $r$ given by (\ref{range-r}). 
This leads us to estimate integrals of the form
\[
\int_{-r}^r e \left(  \lambda_2 y^2  \right)  y^k ~dy,
\]
(for even $k$) which we do by an application of the probability integral
\[
\Phi(x) =  \frac{1}{\sqrt{x}} \int_0^x \frac{e^{-t}}{\sqrt{t}} ~dt,
\]
and estimates of this integral in Gradshteyn
and Ryzhik \cite{GradRzhk}.
We need to estimate the integral of the error 
\begin{equation}\label{Qg-y}
Q(y) = g(x) \frac{dx}{dy} - \sum_{k=0}^{2n} \varpi_{k} y^k,
\end{equation}
and its $y$ derivatives.  This involves previous estimates and the second derivative test, found in Huxley \cite{Hxly}.
Also used is a dyadic decomposition of the interval $ [-r,r]$.

Our proof is different from those in \cite{Wlff} and 
\cite{BlmKhnYng}. The reason we need to cut $[\alpha,\beta]$ 
into three pieces is that our techniques can only be applied to 
a small neighborhood of $\gamma$. On outer subintervals 
$[\alpha,u]$ and $[v,\beta]$ we apply Theorem \ref{WFDT}.

\section{ Lemmas for Theorem \ref{WSPI}}

Under the assumptions of Theorem \ref{WSPI}, we have the following
Taylor expansions at $x=\gamma$,
\begin{align}\label{Taylor-f}
f(x)=f(\gamma)+\sum_{k=2}^{2n+2}\lambda_k(x-\gamma)^k
+
\frac{f^{(2n+3)}(\eta_0)}{(2n+3)!}
(x-\gamma)^{2n+3},
\end{align}
and for $1\leq i\leq 2n+2$
\begin{align}\label{Taylor-f-i}
f^{(i)}(x)=\sum_{k=\max\{2,i\}}^{2n+2}
\lambda_k^{(i)}(x-\gamma)^{k-i}+
\frac{f^{(2n+3)}(\eta_i)}{(2n+3-i)!}
(x-\gamma)^{2n+3-i},
\end{align}
where $\eta_0,\ldots,\eta_{2n+2}$ are numbers between $x$ and $\gamma$ depending on $x$. Here
\begin{align*}\label{lam-i}
\lambda_k^{(i)}=\frac{f^{(k)}(\gamma)}{(k-i)!}=\frac{k!}{(k-i)!}\lambda_k\ \text{for}\ \max\{2,i\}\leq k\leq2n+2.
\end{align*}

Now we change variables from $x$ to $y=h(x-\gamma)$ by
\begin{equation}\label{thm3-1}
f(x)-f(\gamma)=\lambda_2(h(x-\gamma))^2=\lambda_2y^2,
\end{equation}
such that $y=h(x-\gamma)$ has the same sign as that of
$x-\gamma$. 
Define $h(\alpha-\gamma)=-r_1$ and $h(\beta-\gamma)=r_2$, 
i.e., $f(\alpha)=f(\gamma)+\lambda_2r_1^2$ and 
$f(\beta)=f(\gamma)+\lambda_2r_2^2$.
We choose a number $r$ which satisfies that
\begin{align}\label{range-r}
r=\min\{r_1,r_2,\Delta M\}.
\end{align} 
Since
\begin{equation*}
\lambda_2r_1^2
=
f(\alpha)-f(\gamma)
=\frac{f^{\prime\prime}(\eta)}{2!}|\alpha-\gamma|^2
\end{equation*}
for some $\eta\in(\alpha,\gamma)$, by \eqref{U-f'} and \eqref{L-f''}
we have
\begin{eqnarray*}
\frac{1}{C_2^2}\leq \frac{|\alpha-\gamma|^2}{r_1^2}=\frac{2\lambda_2}{f^{\prime\prime}(\eta)}\leq C_2^2.
\end{eqnarray*}
Hence we have
\begin{eqnarray}\label{r1-range}
\frac{r_1}{C_2}\leq |\alpha-\gamma|\leq C_2r_1.
\end{eqnarray}
Likewise
\begin{eqnarray}\label{r2-range}
\frac{r_2}{C_2}\leq |\beta-\gamma|\leq C_2r_2.
\end{eqnarray}
By \eqref{range-r}, \eqref{r1-range} and \eqref{r2-range}, we see that
\begin{eqnarray}\label{estimate-r}
r\geq \min\Big\{\frac{|\alpha-\gamma|}{C_2},\frac{|\beta-\gamma|}{C_2},\frac{M}{T^{\frac{1}{2n+3}}}\Big\}
\end{eqnarray}

Define $u,v$ by $h(u-\gamma)=-r$ and $h(v-\gamma)=r$, 
i.e., $f(u)=f(v)=f(\gamma)+\lambda_2r^2$.
By \eqref{range-r} we see that 
$\alpha\leq u< \gamma< v\leq \beta$.
In this section, we will only consider $x$ and $y$ in
\begin{eqnarray}\label{range-x and y}
-r\leq y\leq r,\ u\leq x\leq v.
\end{eqnarray}
By \eqref{Taylor-f} we know that
\begin{align*}
y^2
&=
(x-\gamma)^2\Big(1+\sum_{k=3}^{2n+2}\frac{\lambda_k}{\lambda_2}(x-\gamma)^{k-2}+
\frac{f^{(2n+3)}(\eta_0)}{\lambda_2(2n+3)!}
(x-\gamma)^{2n+1}
\Big).
\end{align*}
Similar to \eqref{r1-range}, we have
\begin{eqnarray}\label{u,v-range}
\frac{r}{C_2}\leq |u-\gamma|\leq C_2r\ \text{and}\ \frac{r}{C_2}\leq |v-\gamma|\leq C_2r.
\end{eqnarray}
By \eqref{range-r}, \eqref{range-x and y} and \eqref{u,v-range}, we see that
\begin{eqnarray*}
|x-\gamma|\leq \max(|u-\gamma|, |v-\gamma|)
\leq C_2r\leq C_2\Delta M.
\end{eqnarray*}
Note that by \eqref{U-f'}, \eqref{L-f''}  and \eqref{lam} we have
$$
\frac{|\lambda_{k}|}{\lambda_2}
\leq \frac{2C_{k}C_2}{k!M^{k-2}}.
$$
Therefore we have
\begin{eqnarray}\label{lessthan1}
&&
\Big|\sum_{k=3}^{2n+2}
\frac{\lambda_k}{\lambda_2}(x-\gamma)^{k-2}+
\frac{f^{(2n+3)}(\eta_0)}{\lambda_2(2n+3)!}
(x-\gamma)^{2n+1}
\Big|
\\
&\leq&
\sum_{k=3}^{2n+3}\frac{2C_{k}C_2(C_2\Delta)^{k-2}}{k!}
<
\frac{C_2^2\max\{C_k\}\Delta}3
\sum_{k=0}^{2n}\frac{(C_2\Delta)^k}{k!}
\nonumber
\\
&<&
\frac{C_2^2\max\{C_k\}\Delta}3
e^{C_2\Delta}
<
C_2^2\max\{C_k\}\Delta
\leq1
\nonumber
\end{eqnarray}
because of \eqref{Deltamin}.
By \eqref{lessthan1} we can get for $1\leq j\leq 2n+1$
\begin{eqnarray}\label{y^j}
y^j
&=&
(x-\gamma)^j\Big(1+\sum_{k=1}^{2n}\frac{\lambda_{k+2}}{\lambda_2}(x-\gamma)^{k}+
\frac{f^{(2n+3)}(\eta_0)}{\lambda_2(2n+3)!}
(x-\gamma)^{2n+1}
\Big)^{j/2}
\\
&=&
(x-\gamma)^j\Big(1+\sum_{k=1}^{2n}\mu_{jk}(x-\gamma)^{k}+O_n\Big(\frac{|x-\gamma|^{2n+1}}{M^{2n+1}}\Big)\Big)\nonumber
\end{eqnarray}
by Taylor expansion at $x=\gamma$. Here the Taylor coefficients
$\mu_{jk}$ can be determined by applying binomial
expansions to \eqref{y^j}
\begin{align*}
y^j&=(x-\gamma)^j\Big(1+\sum_{i=1}^{\infty}C_{ji}
\Big(\sum_{k=1}^{2n}\frac{\lambda_{k+2}}{\lambda_2}(x-\gamma)^{k}+\frac{f^{(2n+3)}(\eta_0)}{\lambda_2(2n+3)!}
(x-\gamma)^{2n+1}\Big)^{i}\Big)
\end{align*}
with 
$$
C_{ji}
=\Big(\mbox{$\begin{array}{c}j/2\\i\end{array}$}\Big) 
= \frac1{i!}
\prod_{m=0}^{i-1} \Big(\frac{j}{2}-m\Big). 
$$
Consequently
\begin{align}\label{mu_k}
\mu_{j0}=1,\ \mu_{jk}= \sum_{i=1}^{k}\frac{C_{ji}}{\lambda_2^i}\sum_{{\mbox{\tiny$\begin{array}{c}
3\leq n_1,\ldots,n_i\leq 2n+3\\ n_1+\cdots+n_i=k+2i\end{array}$}}}\lambda_{n_1}\cdots\lambda_{n_i}\ \text{for}\ 1\leq k\leq 2n.
\end{align}

The variable change between $x$ and $y$ in \eqref{thm3-1} and
\eqref{y^j}  allows us to express $f^{(i)}(x)$
in terms of $y$ for $1\leq i\leq 2n+2$.

\begin{lemma}\label{lem-1-1}
Suppose \eqref{U-f'} and \eqref{L-f''} hold for $f(x)$.
For $x$ and $y$ in \eqref{range-x and y} with $r$ in
\eqref{range-r} we have
\begin{align*}
f'(x)&=\sum_{k=1}^{2n+1}\theta_{k}^{(1)}y^{k}+O_n\Big(\frac{T|y|^{2n+2}}{M^{2n+3}}\Big),
\end{align*}
where
\begin{align*}
\theta_{1}^{(1)}=\lambda_2^{(1)}=2\lambda_2,\
\theta_{k}^{(1)}=\sum_{ j=1}^{k-1}\frac{C^{(1)}_{k,j}}{\lambda_2^{j-1}}\sum_{{\mbox{\tiny$\begin{array}{c}
3\leq n_1,\ldots,n_j\leq 2n+3\\ n_1+\cdots+n_j=k-1+2j\end{array}$}}}\lambda_{n_1}\cdots\lambda_{n_j}\ \text{for} \ 2\leq k\leq 2n+1.
\end{align*}
\end{lemma}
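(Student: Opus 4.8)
The plan is to prove Lemma \ref{lem-1-1} by combining the Taylor expansion \eqref{Taylor-f-i} of $f^{(i)}(x)$ in powers of $x-\gamma$ with the inversion $(x-\gamma)^j$ in terms of $y^j$ that we now derive from \eqref{y^j}. The first step is to invert the relation \eqref{y^j}. From \eqref{y^j} we have, for $1\leq j\leq 2n+1$,
\begin{equation*}
y^j=(x-\gamma)^j\Big(1+\sum_{k=1}^{2n}\mu_{jk}(x-\gamma)^k+O_n\Big(\frac{|x-\gamma|^{2n+1}}{M^{2n+1}}\Big)\Big),
\end{equation*}
and since the quantity in parentheses is $1+O(\Delta)$ and hence bounded away from $0$ by \eqref{lessthan1}, we may raise it to the power $-1$ (or, for general $j$, we rather invert the $j=1$ relation) and expand by the binomial series. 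This yields an expansion of the same shape
\begin{equation*}
(x-\gamma)^j=y^j\Big(1+\sum_{k=1}^{2n}\nu_{jk}y^k+O_n\Big(\frac{|y|^{2n+1}}{M^{2n+1}}\Big)\Big),
\end{equation*}
where the $\nu_{jk}$ are universal polynomials in the $\mu_{1k}$ (equivalently in the $\lambda_{n_1}\cdots\lambda_{n_i}/\lambda_2^i$), obtained by the standard Lagrange/power-series inversion; the bound on the $\nu_{jk}$ follows from $|\lambda_{k+2}/\lambda_2|\leq 2C_kC_2/(k!M^k)$ exactly as in \eqref{lessthan1}. I would record the size estimates $|\nu_{jk}|=O_n(M^{-k})$ and the fact that the remainder is genuinely $O_n(|y|^{2n+1}/M^{2n+1})$ on the range \eqref{range-x and y}, using $|x-\gamma|\asymp|y|$ (which follows from $|x-\gamma|\leq C_2 r$, $|y|\leq r$, and the lower bound in the same style).

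The second step is to substitute \eqref{Taylor-f-i} with $i=1$. We have
\begin{equation*}
f'(x)=\sum_{k=2}^{2n+2}\lambda_k^{(1)}(x-\gamma)^{k-1}+\frac{f^{(2n+3)}(\eta_1)}{(2n+2)!}(x-\gamma)^{2n+2},
\end{equation*}
and replacing each $(x-\gamma)^{k-1}$ by its expansion in $y$ from the first step, collecting powers of $y$, and controlling the tail using \eqref{U-f'} together with $|x-\gamma|\leq C_2 r\leq C_2\Delta M$, gives
\begin{equation*}
f'(x)=\sum_{k=1}^{2n+1}\theta_k^{(1)}y^k+O_n\Big(\frac{T|y|^{2n+2}}{M^{2n+3}}\Big).
\end{equation*}
Matching lowest order terms, the leading coefficient is $\theta_1^{(1)}=\lambda_2^{(1)}=2\lambda_2$ since $(x-\gamma)=y(1+O(\Delta))$ and the $k=2$ term of $f'$ dominates; for $2\leq k\leq 2n+1$ the coefficient $\theta_k^{(1)}$ is a finite sum over the ways of writing the $y$-power $k$ as (power $k'-1$ coming from $\lambda_{k'}^{(1)}$) times (a monomial $\nu_{k'-1,\cdot}$), which unwinds into the stated double sum $\sum_{j=1}^{k-1}\lambda_2^{-(j-1)}C^{(1)}_{k,j}\sum\lambda_{n_1}\cdots\lambda_{n_j}$ over $3\leq n_i\leq 2n+3$ with $n_1+\cdots+n_j=k-1+2j$; the normalization $n_1+\cdots+n_j=k-1+2j$ is exactly what records that each $\lambda_{n_i}$ contributes $n_i$ to the $x$-degree while the inversion converts $j$ raw $\lambda$'s against $j-1$ copies of $1/\lambda_2$ into a net shift, i.e. the degree bookkeeping $\sum(n_i-2)=(k-1)-1+1$ after accounting for the $(x-\gamma)$ that is turned into $y$ — I would verify this index arithmetic carefully since it is the one place an off-by-one is easy.

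For the error term I would argue as follows: every term dropped in passing from the exact expansion to the $y$-polynomial of degree $2n+1$ is either (i) the Taylor remainder of $f'$, of size $O(T/M^{2n+3})|x-\gamma|^{2n+2}=O_n(T|y|^{2n+2}/M^{2n+3})$ using $|x-\gamma|\asymp|y|$; or (ii) a product of a $\lambda_k^{(1)}(x-\gamma)^{k-1}$ with a high-order tail of the inversion series, which again contributes $O_n(T|y|^{2n+2}/M^{2n+3})$ because $\lambda_k^{(1)}=O(T/M^{k-1})$ and the tail is $O_n(|y|^{2n+2-(k-1)}/M^{2n+2-(k-1)})$; or (iii) cross terms among monomials of total $y$-degree exceeding $2n+1$, bounded the same way. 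Summing the finitely many such contributions (the number depending only on $n$) absorbs everything into a single $O_n(T|y|^{2n+2}/M^{2n+3})$. The main obstacle I anticipate is purely bookkeeping: keeping the truncation orders consistent so that the inversion step only needs $2n$ correction terms (not more) to feed a degree-$(2n+1)$ output for $f'$, and confirming that the combinatorial identity for $\theta_k^{(1)}$ matches the claimed constraint $n_1+\cdots+n_j=k-1+2j$ rather than an adjacent one; the analytic estimates themselves are routine given \eqref{U-f'}, \eqref{L-f''}, \eqref{lessthan1} and the comparability $|x-\gamma|\asymp|y|$ on \eqref{range-x and y}.
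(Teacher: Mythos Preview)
Your plan is correct and is essentially the paper's approach: start from the Taylor expansion \eqref{Taylor-f-i} of $f'(x)$ in powers of $x-\gamma$, change variable to $y$ via \eqref{y^j}, and use $|x-\gamma|\asymp|y|$ to put the remainder in the stated form. The only organizational difference is that the paper carries out the variable change by an induction on $m$ (at each step trading the lowest remaining power $(x-\gamma)^{m+1}$ for $y^{m+1}$ plus higher powers of $x-\gamma$, using \eqref{y^j} directly), whereas you first invert the $j=1$ case of \eqref{y^j} once and for all to write $(x-\gamma)^j$ as a polynomial in $y$ and then substitute; the two procedures are formally equivalent and yield the same coefficients and the same error, and your index check $\sum (n_i-2)=k-1$ indeed recovers the constraint $n_1+\cdots+n_j=k-1+2j$.
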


\begin{proof} We claim that for $1\leq m\leq 2n+1$
\begin{align}\label{f'(x)F1}
f'(x)&=\sum_{k=1}^{m}\theta_{m,k}y^{k}+\sum_{k=m+1}^{2n+1}\theta_{m,k}(x-\gamma)^{k}+O_n\Big(\frac{T|x-\gamma|^{2n+2}}{M^{2n+3}}\Big),
\end{align}
where
\begin{align*}
\theta_{m,1}=\lambda_2^{(1)}=2\lambda_2,\
\theta_{m,k}=\sum_{1\leq j\leq k-1}\frac{C_{k,j}}{\lambda_2^{j-1}}\sum_{{\mbox{\tiny$\begin{array}{c}
n_1+\cdots+n_j=k-1+2j\\ 3\leq n_1,\ldots,n_j\leq 2n+3\end{array}$}}}\lambda_{n_1}\cdots\lambda_{n_j}\ \text{for}\ 2\leq k\leq 2n+1,
\end{align*}
which can be proved by induction.
Take $m=2n+1$ in  \eqref{f'(x)F1} we get
\begin{align*}
f'(x)&=\sum_{k=1}^{2n+1}\theta_{2n+1,k}y^{k}+O_n\Big(\frac{T|x-\gamma|^{2n+2}}{M^{2n+3}}\Big)
=:\sum_{k=1}^{2n+1}\theta_{k}^{(1)}y^{k}+O_n\Big(\frac{T|x-\gamma|^{2n+2}}{M^{2n+3}}\Big).
\end{align*}
Using \eqref{thm3-1} and from the second  order Taylor expansion we see that
\begin{align*}
\lambda_2 y^2=f(x)-f(\gamma)=\frac{f^{\prime\prime}(w)}{2!}(x-\gamma)^2,
\end{align*}
where $w$ is some constant between $x$ and $\gamma$.  Then by \eqref{U-f'} and \eqref{L-f''}
\begin{eqnarray*}
\frac{1}{C_2^2}\leq \frac{|x-\gamma|^2}{y^2}=2\frac{\lambda_2}{f^{\prime\prime}(\eta)}\leq C_2^2.
\end{eqnarray*}
Hence similar to \eqref{u,v-range} we get
$|x-\gamma|/C_2\leq y\leq C_2|x-\gamma|$. 
Then using above estimates we get
\begin{align*}
f'(x)&=\sum_{k=1}^{2n+1}\theta_{k}^{(1)}y^{k}+O_n\Big(\frac{T|y|^{2n+2}}{M^{2n+3}}\Big).
\end{align*}
\end{proof}

Similarly, we can change $x$ to $y$ in \eqref{Taylor-f-i} by \eqref{y^j}. We have
for $2\leq i\leq 2n+2$
\begin{align}\label{f^i(x)-2}
f^{(i)}(x)&=\sum_{k=0}^{2n+2-i}\theta_{k}^{(i)}y^{k}+O_n\Big(\frac{T|y|^{2n+3-i}}{M^{2n+3}}\Big),
\end{align}
where
\begin{align*}
\theta_{0}^{(i)}=\lambda_i^{(i)}=i!\lambda_i,\
\theta_{k}^{(i)}=\sum_{ j=1}^{k}\frac{C_{k,j}}{\lambda_2^{j-1}}\sum_{{\mbox{\tiny$\begin{array}{c}
3\leq n_1,\ldots,n_j\leq 2n+3\\ n_1+\cdots+n_j=k+i-2+2j\end{array}$}}}\lambda_{n_1}\cdots\lambda_{n_j}\ \text{for}\ 1\leq k\leq 2n+2-i.
\end{align*}
Now by the definition  of $y$ in \eqref{thm3-1} we can compute $\frac{dx}{dy}$.

\begin{lemma}\label{lem-1}
With the above notation we assume \eqref{U-f'} and \eqref{L-f''}. Then
\begin{align}\label{dx/dy}
\frac{dx}{dy}&=\sum_{k=0}^{2n}\rho_ky^{k}+O_n\Big(\frac{|y|^{2n+1}}{M^{2n+1}}\Big)
\end{align}
where
\begin{align}\label{rho}
\rho_0=1,\ \rho_k= \sum_{j=1}^{k}\frac{C'_{kj}}{\lambda_2^{j}}\sum_{{\mbox{\tiny$\begin{array}{c}
3\leq n_1,\ldots,n_j\leq 2n+3\\n_1+\cdots+n_j=k+2j \end{array}$}}}\lambda_{n_1}\cdots\lambda_{n_j}\ \text{for}\ k\geq 1.
\end{align}
\end{lemma}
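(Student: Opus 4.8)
\medskip

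The plan is to differentiate the defining relation \eqref{thm3-1} with respect to $y$ and solve for $dx/dy$, then substitute the $y$-expansions of $f'(x)$ already obtained in Lemma \ref{lem-1-1}. Differentiating $f(x)-f(\gamma)=\lambda_2 y^2$ in $y$ gives $f'(x)\,\frac{dx}{dy}=2\lambda_2 y$, so $\frac{dx}{dy}=2\lambda_2 y/f'(x)$. By Lemma \ref{lem-1-1} we have $f'(x)=2\lambda_2 y\bigl(1+\sum_{k=1}^{2n}(\theta_{k+1}^{(1)}/(2\lambda_2))y^{k}+O_n(T|y|^{2n+1}/(\lambda_2 M^{2n+3}))\bigr)$, where I have factored out the leading term $\theta_1^{(1)}y=2\lambda_2 y$; note the factored bracket is $1+(\text{small})$ on the range \eqref{range-x and y} exactly as in \eqref{lessthan1}, since $|\theta_{k+1}^{(1)}/(2\lambda_2)|\ll |y|^{k}/M^{k}$ by the size bounds $|\lambda_{n}|/\lambda_2\ll 1/M^{n-2}$ used throughout \S2. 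Therefore
\[
\frac{dx}{dy}=\frac{2\lambda_2 y}{f'(x)}
=\Bigl(1+\sum_{k=1}^{2n}\frac{\theta_{k+1}^{(1)}}{2\lambda_2}y^{k}+O_n\Bigl(\frac{|y|^{2n+1}}{M^{2n+1}}\Bigr)\Bigr)^{-1},
\]
and expanding the reciprocal as a geometric/binomial series in the small quantity yields the claimed polynomial $\sum_{k=0}^{2n}\rho_k y^k$ plus an $O_n(|y|^{2n+1}/M^{2n+1})$ remainder, with $\rho_0=1$.

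\medskip

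To pin down the coefficients $\rho_k$ in the stated form \eqref{rho}, I would track how the $\theta_{k+1}^{(1)}$ feed through the reciprocal expansion. Each $\theta_{k+1}^{(1)}$ is, by Lemma \ref{lem-1-1}, a sum over $j$ of $\lambda_2^{-(j-1)}$ times a product $\lambda_{n_1}\cdots\lambda_{n_j}$ with $n_1+\cdots+n_j=k+2j$ and each $n_i\ge 3$; dividing by $2\lambda_2$ turns this into the shape $\lambda_2^{-j}(\text{product of }j\text{ many }\lambda\text{'s with indices summing to }k+2j)$. The reciprocal expansion multiplies together several such blocks: a product of blocks with parameters $j_1,\dots,j_s$ and degrees $k_1,\dots,k_s$ (with $k_1+\cdots+k_s=k$) contributes a term with $\lambda_2^{-(j_1+\cdots+j_s)}$ times a product of $j_1+\cdots+j_s$ many $\lambda_{n_i}$'s whose indices sum to $\sum_i(k_i+2j_i)=k+2(j_1+\cdots+j_s)$. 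Collecting by the total number $j=j_1+\cdots+j_s$ of factors gives exactly $\rho_k=\sum_{j=1}^{k}\lambda_2^{-j}\sum_{n_1+\cdots+n_j=k+2j,\ 3\le n_i\le 2n+3}C'_{kj}\lambda_{n_1}\cdots\lambda_{n_j}$, absorbing all numerical factors (the binomial coefficients from the reciprocal series and the $1/(2\lambda_2)$ normalizations) into the constants $C'_{kj}$. The constraint $n_i\le 2n+3$ is automatic because the $\theta$'s only involve $\lambda_{n}$ for $n\le 2n+3$, and the error term from Lemma \ref{lem-1-1} is the only source of indices beyond that, which is pushed into the $O_n$ term.

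\medskip

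For the error term I need to check that all the neglected pieces are genuinely $O_n(|y|^{2n+1}/M^{2n+1})$. The remainder in Lemma \ref{lem-1-1} is $O_n(T|y|^{2n+2}/M^{2n+3})$; dividing by $f'(x)\asymp \lambda_2|y|\asymp (T/M^2)|y|$ gives $O_n(|y|^{2n+1}/M^{2n+1})$, as required. The tail of the binomial/geometric series — products of blocks with $k_1+\cdots+k_s\ge 2n+1$ — is bounded term-by-term using $|\theta_{k+1}^{(1)}/(2\lambda_2)|\ll (|y|/M)^{k}$ (uniformly for $1\le k\le 2n$ and $|y|\le r\le \Delta M$, by the computation in \eqref{lessthan1}), so each such product is $\ll (|y|/M)^{k_1+\cdots+k_s}\le (|y|/M)^{2n+1}$, and there are only $O_n(1)$ of them. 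The main obstacle, such as it is, is bookkeeping: verifying that the index-sum identity $n_1+\cdots+n_j=k+2j$ and the range $3\le n_i$ survive intact through the composition of the reciprocal expansion with the nested sums defining the $\theta^{(1)}_{k+1}$, and that nothing with an index below $3$ or with the wrong $\lambda_2$-power sneaks in; this is the same combinatorial pattern already established for $\mu_{jk}$ in \eqref{mu_k} and for the $\theta$'s in Lemma \ref{lem-1-1}, so I would present it as a short induction or a direct appeal to that pattern rather than re-deriving it from scratch.
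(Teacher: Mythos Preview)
Your argument is correct, but it follows a genuinely different route from the paper's. The paper does not invoke Lemma~\ref{lem-1-1} at all: instead it writes $\dfrac{dx}{dy}=\dfrac{2\lambda_2 y}{f'(x)}$ and expands everything first as a power series in $(x-\gamma)$ --- computing $\dfrac{y}{x-\gamma}$ and $\dfrac{x-\gamma}{f'(x)/(2\lambda_2)}$ separately via \eqref{backwards-1}--\eqref{backwards-2}, multiplying to obtain \eqref{dx/dy-new-1}, and only at the very end converting the $(x-\gamma)$-series into a $y$-series by the same inductive substitution used for Lemma~\ref{lem-1-1}. Your approach short-circuits this: since Lemma~\ref{lem-1-1} already delivers $f'(x)$ as a polynomial in $y$ with the correct coefficient structure, a single geometric-series inversion in $y$ suffices, and the combinatorial bookkeeping for \eqref{rho} is essentially a one-step version of what the paper spreads over three multiplicative steps ($\mu_k'$, $\mu_k''$, $\mu_k'''$, then $\rho_k$). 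Your route is tidier and makes the dependence on Lemma~\ref{lem-1-1} explicit; the paper's route is more self-contained and reuses the $(x-\gamma)$ machinery already set up in \eqref{y^j}--\eqref{mu_k}.

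Two small points to tighten. First, the bound you wrote as ``$|\theta_{k+1}^{(1)}/(2\lambda_2)|\ll |y|^{k}/M^{k}$'' should read $|\theta_{k+1}^{(1)}/(2\lambda_2)|\ll M^{-k}$ (the coefficient is independent of $y$); the term itself is then $\ll (|y|/M)^k$. Second, your appeal to \eqref{lessthan1} for the smallness of the bracketed sum is by analogy rather than direct citation, since \eqref{lessthan1} is stated in the variable $x-\gamma$; you should either redo that short calculation in $y$ (using $|y|\le r\le \Delta M$ and the size of $\theta_{k+1}^{(1)}$) or note explicitly that $|y|\asymp |x-\gamma|$ within a factor $C_2$, so the same constants in \eqref{Deltamin} still force the sum below~$1$.
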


\begin{proof} First by \eqref{thm3-1} we see that
\begin{align}\label{dx/dy-1}
\frac{dx}{dy}=\frac{2\lambda_2y}{f'(x)}.
\end{align}
Then by \eqref{Taylor-f-i} with $i=1$  we get
\begin{eqnarray*}
\frac{dx}{dy}=\frac{2\lambda_2y}{2\lambda_2(x-\gamma)\Big(1+\sum_{k=3}^{2n+2}
\frac{k\lambda_k}{2\lambda_2}(x-\gamma)^{k-2}+
\frac{f^{(2n+3)}(\eta_1)}{2\lambda_2(2n+2)!}
(x-\gamma)^{2n+1}\Big)}.
\end{eqnarray*}
Similar to \eqref{lessthan1}, we can prove that
\begin{eqnarray*}
&&
\Big|\sum_{k=3}^{2n+2}
\frac{k\lambda_k}{2\lambda_2}(x-\gamma)^{k-2}+
\frac{f^{(2n+3)}(\eta_1)}{2\lambda_2(2n+2)!}
(x-\gamma)^{2n+1}\Big|
\\
&\leq&
\sum_{k=3}^{2n+3}\frac{C_{k}C_2(C_2\Delta)^{k-2}}{(k-1)!}
<
\frac{C_2^2\max\{C_k\}\Delta}2
\sum_{k=0}^{2n}\frac{(C_2\Delta)^k}{k!}
\nonumber
\\
&<&
\frac{C_2^2\max\{C_k\}\Delta}2
e^{C_2\Delta}
\leq 
C_2^2\max\{C_k\}\Delta
\leq1
\end{eqnarray*}
because of \eqref{Deltamin}. Therefore we get 
\begin{eqnarray}\label{backwards-1}
\frac{dx}{dy}&=&\frac{y}{x-\gamma}\sum_{j=0}^{\infty}(-1)^j\Big(\sum_{k=3}^{2n+2}
\frac{k\lambda_k}{2\lambda_2}(x-\gamma)^{k-2}+
\frac{f^{(2n+3)}(\eta_1)}{2\lambda_2(2n+2)!}
(x-\gamma)^{2n+1}\Big)^j
\\
&=&\frac{y}{x-\gamma}\Big(1+\sum_{k=1}^{2n}\mu_{k}'(x-\gamma)^{k}+O_n\Big(\frac{|x-\gamma|^{2n+1}}{M^{2n+1}}\Big)\Big)\nonumber
\end{eqnarray}
with 
\begin{align*}
\mu_{0}'=1,\ \mu_{k}'= \sum_{i=1}^{k}\frac{C_{i}'}{\lambda_2^i}\sum_{{\mbox{\tiny$\begin{array}{c}
3\leq n_1,\ldots,n_i\leq 2n+3\\ n_1+\cdots+n_i=k+2i\end{array}$}}}\lambda_{n_1}\cdots\lambda_{n_i}\ \text{for}\ 1\leq k\leq 2n.
\end{align*}

Now by \eqref{y^j}  with $j=1$, we have (noting \eqref{lessthan1})
\begin{eqnarray}\label{backwards-2}
\frac{y}{x-\gamma}&=&\Big(1+\sum_{k=1}^{2n}\frac{\lambda_{k+2}}{\lambda_2}(x-\gamma)^{k}+
\frac{f^{(2n+3)}(\eta_0)}{\lambda_2(2n+3)!}
(x-\gamma)^{2n+1}
\Big)^{1/2}
\\
&=&1+\sum_{k=1}^{2n}\mu_{k}''(x-\gamma)^{k}+O_n\Big(\frac{|x-\gamma|^{2n+1}}{M^{2n+1}}\Big)\nonumber
\end{eqnarray}
with
\begin{align*}
\mu_{0}''=1,\ \mu_{k}''= \sum_{i=1}^{k}\frac{C_{i}''}{\lambda_2^i}\sum_{{\mbox{\tiny$\begin{array}{c}
3\leq n_1,\ldots,n_i\leq 2n+3\\ n_1+\cdots+n_i=k+2i\end{array}$}}}\lambda_{n_1}\cdots\lambda_{n_i}\ \text{for}\ 1\leq k\leq 2n.
\end{align*}

Then  by \eqref{backwards-1} and \eqref{backwards-2}, we conclude that
\begin{eqnarray}\label{dx/dy-new-1}
\frac{dx}{dy}&=&\Big(1+\sum_{k=1}^{2n}\mu_{k}'(x-\gamma)^{k}+O_n\Big(\frac{|x-\gamma|^{2n+1}}{M^{2n+1}}\Big)\Big)
\Big(1+\sum_{k=1}^{2n}\mu_{k}''(x-\gamma)^{k}+O_n\Big(\frac{|x-\gamma|^{2n+1}}{M^{2n+1}}\Big)\Big)\nonumber
\\
&=&\sum_{k=0}^{2n}\mu_{k}'''(x-\gamma)^{k}+O_n\Big(\frac{|x-\gamma|^{2n+1}}{M^{2n+1}}\Big),
\end{eqnarray}
with $\mu_{k}'''=\sum_{i=0}^{k}\mu_{i}'\mu_{k-i}''$. 
Since (ignoring all coefficients)
\begin{eqnarray*}
\mu_{k}'\mu_{l}''&=&\sum_{1\leq s\leq k}\frac{1}{\lambda_2^{s}}\sum_{{\mbox{\tiny$\begin{array}{c}
3\leq n_i\leq 2n+3\\ n_1+\cdots+n_s=k+2s\end{array}$}}}\lambda_{n_1}\cdots\lambda_{n_s}
\\
&&\times
\sum_{1\leq s\leq l}\frac{1}{\lambda_2^{s}}\sum_{{\mbox{\tiny$\begin{array}{c}
3\leq n_i\leq 2n+3\\ n_1+\cdots+n_s=l+2s\end{array}$}}}\lambda_{n_1}\cdots\lambda_{n_s}
\\
&=&
\sum_{1\leq s\leq k+l}\frac{1}{\lambda_2^{s}}\sum_{{\mbox{\tiny$\begin{array}{c}
3\leq n_i\leq 2n+3\\ n_1+\cdots+n_s=k+l+2s\end{array}$}}}\lambda_{n_1}\cdots\lambda_{n_s},
\end{eqnarray*}
we get that
\begin{align*}
\mu_{0}'''=1,\ \mu_{k}'''= \sum_{i=1}^{k}\frac{C_{i}'''}{\lambda_2^i}\sum_{{\mbox{\tiny$\begin{array}{c}
3\leq n_1,\ldots,n_i\leq 2n+3\\ n_1+\cdots+n_i=k+2i\end{array}$}}}\lambda_{n_1}\cdots\lambda_{n_i}\ \text{for}\ 1\leq k\leq 2n.
\end{align*}

Finally,  changing $x$ to $y$ in \eqref{dx/dy-new-1}, we get
\begin{align*}
\frac{dx}{dy}&=\sum_{k=0}^{2n}\rho_ky^{k}+O_n\Big(\frac{|y|^{2n+1}}{M^{2n+1}}\Big)
\end{align*}
where
\begin{align*}
\rho_0=1,\ \rho_k= \sum_{j=1}^{k}\frac{C'_{kj}}{\lambda_2^{j}}\sum_{{\mbox{\tiny$\begin{array}{c}
3\leq n_1,\ldots,n_j\leq 2n+3\\n_1+\cdots+n_j=k+2j \end{array}$}}}\lambda_{n_1}\cdots\lambda_{n_j}\ \text{for}\ k\geq 1,
\end{align*}
which can be proved by induction like Lemma \ref{lem-1-1}.
\end{proof}

By the assumptions of Theorem \ref{WSPI}, we also have the following Taylor expansions
\begin{align}\label{g-1}
g(x)&
=\sum_{k=0}^{2n}\eta_k(x-\gamma)^k+O_n\Big(\frac{U|x-\gamma|^{2n+1}}{N^{2n+1}}\Big),
\end{align}
and
\begin{align}\label{g-i}
\frac{d^ig}{dx^i}&
=\sum_{k=i}^{2n}\eta_k^{(i)}(x-\gamma)^{k-i}+O_n\Big(\frac{U|x-\gamma|^{2n+1-i}}{N^{2n+1}}\Big),
\end{align}
with
\begin{align*}
\eta_k^{(i)}=\frac{g^{(k)}(\gamma)}{(k-i)!}=\frac{k!}{(k-i)!}\eta_k\ \text{for}\ i\leq k\leq 2n.
\end{align*}
Similarly, if we change variables in \eqref{g-1} to $y$, we can get
\begin{align}\label{g}
g(x)&
=\sum_{k=0}^{2n}\eta'_ky^k+O_n\Big(U|y|^{2n+1}\Big(\frac{1}{NM^{2n}}+\frac{1}{N^{2n+1}}\Big)\Big),
\ \ \text{with}
\ \ \eta'_0=g(\gamma).
\end{align}
To determine other $\eta'_k$, we substitute \eqref{y^j} into \eqref{g} to get
\begin{eqnarray*}
g(x)
&=&
\eta'_0+\sum_{k=1}^{2n}\eta'_k(x-\gamma)^k
\Big(\sum_{\ell=0}^{2n}\mu_{k\ell}(x-\gamma)^{\ell}+O_n\Big(\frac{|x-\gamma|^{2n+1}}{M^{2n+1}}\Big)\Big)
+
O_n\Big(U|y|^{2n+1}\Big(\frac{1}{NM^{2n}}+\frac{1}{N^{2n+1}}\Big)\Big)
\\
&=&
\eta'_0+\sum_{m=1}^{2n}(x-\gamma)^{m}\sum_{{\mbox{\tiny$\begin{array}{c}
k\geq 1\\ \ell\geq 0\\ k+\ell=m\end{array}$}}}\eta'_k\mu_{k\ell}+O_n\Big(U|y|^{2n+1}\Big(\frac{1}{NM^{2n}}+\frac{1}{N^{2n+1}}\Big)\Big).
\end{eqnarray*}
Consequently
\begin{align}\label{eta-1}
\eta_1=\sum_{{\mbox{\tiny$\begin{array}{c}
k\geq 1\\\ell\geq 0\\ k+\ell=m\end{array}$}}}\eta'_k\mu_{k\ell}=\eta'_1\mu_{10}=\eta'_1,
\end{align}
and for $2\leq m\leq 2n$
\begin{align}\label{eta-m}
\eta_m=\sum_{{\mbox{\tiny$\begin{array}{c}
k\geq 1\\ \ell\geq 0\\ k+\ell=m\end{array}$}}}\eta'_k\mu_{k\ell}=\eta'_m+\sum_{{\mbox{\tiny$\begin{array}{c}
k,\ell\geq 1\\ k+\ell=m\end{array}$}}}\eta'_k\mu_{k\ell},
\end{align}
where $\eta_m$  is defined in \eqref{g-1}. We may compute
$\eta'_m$ for $m\geq 2$ recursively using \eqref{eta-m}.

\begin{lemma}\label{lem-2}
With the above notation
\begin{align}\label{eta'-m}
\eta'_m=\eta_m+\sum_{k=1}^{m-1}\eta_k\sum_{j=1}^{m-k}\frac{C_{mkj}}{\lambda_2^j}
\sum_{{\mbox{\tiny$\begin{array}{c}
3\leq n_1,\ldots,n_j\leq 2n+3\\n_1+\cdots+n_j=m-k+2j \end{array}$}}}\lambda_{n_1}\cdots\lambda_{n_j}
\end{align}
where the $k$ sum vanishes when $m\leq 1$.
\end{lemma}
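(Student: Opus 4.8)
The plan is to prove \eqref{eta'-m} by induction on $m$, inverting the lower–triangular system \eqref{eta-1}--\eqref{eta-m}. For $m=1$ the relation \eqref{eta-1} gives $\eta'_1=\eta_1$, which matches \eqref{eta'-m} because the $k$-sum is then empty. For the inductive step fix $m$ with $2\leq m\leq 2n$ and assume \eqref{eta'-m} for all smaller indices. Solving \eqref{eta-m} for $\eta'_m$ gives
\[
\eta'_m=\eta_m-\sum_{\substack{k,\ell\geq 1\\ k+\ell=m}}\eta'_k\,\mu_{k\ell},
\]
and since every $k$ occurring here satisfies $1\leq k\leq m-1$, one may substitute the inductive formula for $\eta'_k$ together with the explicit shape of $\mu_{k\ell}$ from \eqref{mu_k}.

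The key algebraic observation — already recorded in the proof of Lemma \ref{lem-1} — is that expressions of the form $\sum_{a\geq 1}\lambda_2^{-a}\sum_{n_1+\cdots+n_a=p+2a}\lambda_{n_1}\cdots\lambda_{n_a}$, with $3\leq n_i\leq 2n+3$, are closed under multiplication, the ``weight'' $p$ being additive: a factor of weight $p$ times one of weight $q$ produces one of weight $p+q$, and the constraint $3\leq n_i\leq 2n+3$ is preserved. In the displayed identity above $\mu_{k\ell}$ carries weight $\ell$, while the inner part of $\eta'_k$ that multiplies a given $\eta_{k'}$ (for $1\leq k'\leq k$) carries weight $k-k'$, with the convention that the case $k'=k$ is the empty product of weight $0$, i.e. the plain term $\eta_k\mu_{k\ell}$. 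Multiplying these, the coefficient of $\eta_{k'}$ acquires total weight $(k-k')+\ell=m-k'$. Hence, after collecting terms, each $\eta_{k'}$ with $1\leq k'\leq m-1$ appears multiplied by an expression of the form
\[
\sum_{j\geq 1}\frac{C_{mk'j}}{\lambda_2^{\,j}}\sum_{\substack{3\leq n_1,\ldots,n_j\leq 2n+3\\ n_1+\cdots+n_j=m-k'+2j}}\lambda_{n_1}\cdots\lambda_{n_j},
\]
where the constants $C_{mk'j}$ simply absorb all the binomial-type coefficients $C_{ji}$, $C_{k,j}$ and so on picked up along the way; the lemma only asserts that such constants exist. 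Moreover $\eta_m$ itself occurs with coefficient exactly $1$, since no substitution of an $\eta'_k$ with $k\leq m-1$ can reintroduce $\eta_m$. Finally, the requirement $n_1+\cdots+n_j=m-k'+2j$ with each $n_i\geq 3$ forces $3j\leq m-k'+2j$, i.e. $j\leq m-k'$; together with $j\geq 1$ this is exactly the range $1\leq j\leq m-k'$ appearing in \eqref{eta'-m}, and renaming $k'$ as $k$ completes the induction.

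I expect the only genuine work to be the bookkeeping in the middle paragraph: verifying that the exponent of $\lambda_2$ and the sum of the subscripts $n_i$ stay synchronized under each multiplication and each reindexing, and that the nested ranges over $k$, $\ell$ and the internal indices collapse to the single clean range $1\leq j\leq m-k$. None of this is conceptually hard, but it is error-prone; the safest route is to isolate the weight-additivity statement once, prove it cleanly, and then apply it mechanically rather than tracking coefficients by hand.
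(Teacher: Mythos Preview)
Your proposal is correct and follows essentially the same approach as the paper: induction on $m$, with the inductive step obtained by solving \eqref{eta-m} for $\eta'_m$, substituting the inductive hypothesis for each $\eta'_k$ and the explicit form \eqref{mu_k} of $\mu_{k\ell}$, and then checking that the resulting expression fits the asserted shape. Your framing via ``weight additivity'' is a clean way to organize the bookkeeping that the paper carries out by an explicit reindexing $h=i+j$; your remark that $n_i\geq 3$ forces $j\leq m-k$ is a nice extra observation that the paper leaves implicit.
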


\begin{proof} By \eqref{eta-1}, \eqref{eta'-m} holds for $m=1$. Suppose \eqref{eta'-m} holds for any number $\leq m$.
Then by \eqref{eta-m} and \eqref{mu_k}
\begin{eqnarray}\label{eta'-m+1}
\eta'_{m+1}
&=&
\eta_{m+1}-\sum_{\ell=1}^{m}\eta'_\ell\mu_{\ell,m+1-\ell}
\\
&=&
\eta_{m+1}-\sum_{\ell=1}^{m}\eta_\ell\sum_{i=1}^{m+1-\ell}\frac{C_{\ell i}}{\lambda_2^i}\sum_{{\mbox{\tiny$\begin{array}{c}
3\leq m_1,\ldots,m_i\leq 2n+3\\ m_1+\cdots+m_i=m+1-\ell+2i\end{array}$}}}\lambda_{m_1}\cdots\lambda_{m_i}
\nonumber
\\
&&-
\sum_{\ell=1}^{m}\sum_{k=1}^{\ell-1}\eta_k\sum_{j=1}^{\ell-k}\frac{C_{\ell kj}}{\lambda_2^j}
\sum_{{\mbox{\tiny$\begin{array}{c}
3\leq n_1,\ldots,n_j\leq 2n+3\\n_1+\cdots+n_j=\ell-k+2j \end{array}$}}}\lambda_{n_1}\cdots\lambda_{n_j}
\nonumber
\\
&&\times
\sum_{i=1}^{m+1-\ell}\frac{C_{\ell i}}{\lambda_2^i}\sum_{{\mbox{\tiny$\begin{array}{c}
3\leq m_1,\ldots,m_i\leq 2n+3\\ m_1+\cdots+m_i=m+1-\ell+2i\end{array}$}}}\lambda_{m_1}\cdots\lambda_{m_i}.
\nonumber
\end{eqnarray}
The first two terms on the right side of \eqref{eta'-m+1} fit \eqref{eta'-m} for $m+1$. For the third term, we change
the order of sums on $\ell$ and $k$, let $h=i+j$, denote $m_1,\ldots,m_i,n_1,\ldots,n_j$ by $p_1,\ldots,p_h$, and get
\begin{align*}
-&\sum_{k=1}^{m-1}\eta_k\sum_{\ell=k+1}^{m}\sum_{h=2}^{m+1-k}\frac{1}{\lambda_2^h}
\sum_{{\mbox{\tiny$\begin{array}{c}
i,j\geq 1\\ i+j=h\end{array}$}}}C_{\ell i}C_{\ell kj}
\sum_{{\mbox{\tiny$\begin{array}{c}
3\leq p_1,\ldots,p_h\leq 2n+3\\p_1+\cdots+p_h=m+1-k+2h\\p_1+\cdots+p_i=m+1-\ell+2i \end{array}$}}}\lambda_{p_1}\cdots\lambda_{p_h} ��
\end{align*}
which also fits \eqref{eta'-m} for $m+1$.
\end{proof}

Similarly using \eqref{y^j} in \eqref{g-i} we get for $1\leq i\leq n+1$
\begin{align}\label{g^i}
\frac{d^ig}{dx^i}&
=\sum_{k=0}^{2n-i}{\eta_k^{(i)}}'y^k+O_n\Big(U|y|^{2n+1-i}\Big(\frac{1}{NM^{2n}}+\frac{1}{N^{2n+1}}\Big)\Big),
\end{align}
where
\begin{align}\label{eta'-m-1}
{\eta_k^{(i)}}'=\frac{(k+i)!}{k!}\eta_{k+i}+\sum_{m=1}^{k+i-1}\eta_m\sum_{j=1}^{k+i-m}\frac{C_{kmj}}{\lambda_2^j}
\sum_{{\mbox{\tiny$\begin{array}{c}
3\leq n_1,\ldots,n_j\leq 2n+3\\n_1+\cdots+n_j=k+i-m+2j \end{array}$}}}\lambda_{n_1}\cdots\lambda_{n_j}.
\end{align}

Multiplying $g(x)$ in \eqref{g} with $\frac{dx}{dy}$ in \eqref{dx/dy} and using
\begin{eqnarray*}
{\eta_k^{(i)}}'\ll\Big(\frac{U}{N^k}+\frac{U}{NM^{k-1}}\Big),\ \rho_k\ll\frac{1}{M^k},
\end{eqnarray*}
we get
\begin{align}\label{g-dx/dy}
g(x)\frac{dx}{dy}=\sum_{k=0}^{2n}\varpi_ky^k+O_n\Big(U|y|^{2n+1}\Big(\frac{1}{M^{2n+1}}+\frac{1}{N^{2n+1}}\Big)\Big),
\end{align}
where
\begin{align}\label{varpi-1}
\varpi_k=\sum_{\ell=0}^{k}\eta'_\ell\rho_{k-\ell}.
\end{align}
Note that by \eqref{rho} and \eqref{g},
$\varpi_0=\eta'_0\rho_{0}=g(\gamma)$.

\begin{lemma}
With the notation as above, \eqref{varpi-5} holds.
\end{lemma}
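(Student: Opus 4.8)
The plan is to substitute the explicit formulas for $\eta'_\ell$ (Lemma \ref{lem-2}) and $\rho_{k-\ell}$ (Lemma \ref{lem-1}) into the defining relation \eqref{varpi-1} and then collect terms according to the index of the surviving $\eta$-coefficient, in the same bookkeeping spirit as the proof of Lemma \ref{lem-2}. First I would isolate the top term $\ell=k$ in \eqref{varpi-1}: since $\rho_0=1$, it equals $\eta'_k$, and by \eqref{eta'-m} this is $\eta_k$ plus a sum over $\eta_m$, $1\le m\le k-1$, each multiplied by a ``$\lambda$-shape'' $\sum_{j\ge1}\lambda_2^{-j}\sum\lambda_{n_1}\cdots\lambda_{n_j}$ with $n_1+\cdots+n_j=k-m+2j$ and $1\le j\le k-m$, and all $n_i\in[3,2n+3]$. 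This already has the form required by \eqref{varpi-5}; moreover it is the only source of an $\eta_k$ term, and there it occurs with coefficient $1$, because every $\rho$ is $\eta$-free and every $\eta'_\ell$ with $\ell<k$ involves only $\eta_m$ with $m\le\ell<k$. This matches the placement of the isolated $\eta_k$ outside the $\ell$-sum in \eqref{varpi-5}.

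Next, for $0\le\ell\le k-1$ I would expand $\eta'_\ell\rho_{k-\ell}$ using \eqref{eta'-m} and \eqref{rho}. The piece $\eta_\ell\rho_{k-\ell}$ is, by \eqref{rho}, already of the desired shape: $\eta_\ell$ times $\sum_{j=1}^{k-\ell}C'_{k-\ell,j}\lambda_2^{-j}\sum\lambda_{n_1}\cdots\lambda_{n_j}$ with $n_1+\cdots+n_j=(k-\ell)+2j$. The remaining piece is a double product: $\eta_m$ (with $1\le m\le\ell-1$) times a shape of parameter $\ell-m$ with $j''$ factors coming from $\eta'_\ell$, times a shape of parameter $k-\ell$ with $j'$ factors coming from $\rho_{k-\ell}$. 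Here I would invoke the multiplicativity of these shapes already used in the proof of Lemma \ref{lem-1} (the computation of $\mu'_k\mu''_l$) and again in the proof of Lemma \ref{lem-2}: ignoring numerical coefficients,
\[
\Big(\sum_{j''}\frac{1}{\lambda_2^{j''}}\sum_{n_1+\cdots+n_{j''}=(\ell-m)+2j''}\lambda_{n_1}\cdots\lambda_{n_{j''}}\Big)
\Big(\sum_{j'}\frac{1}{\lambda_2^{j'}}\sum_{n_1+\cdots+n_{j'}=(k-\ell)+2j'}\lambda_{n_1}\cdots\lambda_{n_{j'}}\Big)
=\sum_{h}\frac{1}{\lambda_2^{h}}\sum_{p_1+\cdots+p_h=(k-m)+2h}\lambda_{p_1}\cdots\lambda_{p_h},
\]
with $h=j'+j''$. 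Since $j''\le\ell-m$ and $j'\le k-\ell$ we get $h\le k-m$, and the $p_i$ automatically lie in $[3,2n+3]$; so this contribution also matches the shape in \eqref{varpi-5} with $\eta$-index $m$, exponent range $1\le h\le k-m$, and $\lambda$-index sum $(k-m)+2h$.

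Finally I would sum over $\ell$ from $0$ to $k-1$, add in the $\ell=k$ contribution from the first step, and merge everything by the index of the surviving $\eta$-coefficient. Renaming that index $\ell$ as in \eqref{varpi-5}, for each fixed $0\le\ell\le k-1$ all accumulated contributions are of the form $\eta_\ell\,\lambda_2^{-j}\sum_{n_1+\cdots+n_j=k-\ell+2j}\lambda_{n_1}\cdots\lambda_{n_j}$ with $1\le j\le k-\ell$; absorbing all the numerical factors that have accumulated into new constants $C_{k\ell j}$ yields exactly \eqref{varpi-5}. The endpoint case $\ell=0$ is consistent because $\eta'_0=g(\gamma)=\eta_0$ (by \eqref{g} and $\eta_0=g(\gamma)$), so $\eta'_0\rho_k$ contributes $\eta_0\sum_{j=1}^{k}C'_{kj}\lambda_2^{-j}\sum\lambda_{n_1}\cdots\lambda_{n_j}$, which is the $\ell=0$ term of \eqref{varpi-5}. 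The only genuine obstacle is the index chasing: one must confirm that the $j''$- and $j'$-ranges combine to give precisely $1\le j\le k-\ell$ in the final formula and that no $n_i$ escapes $[3,2n+3]$. Both follow from the inequalities $j''\le\ell-m$ and $j'\le k-\ell$ noted above, exactly as in Lemmas \ref{lem-1-1}, \ref{lem-1} and \ref{lem-2}, so no idea beyond careful bookkeeping is required.
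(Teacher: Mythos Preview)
Your proposal is correct and follows essentially the same approach as the paper: substitute \eqref{eta'-m} and \eqref{rho} into \eqref{varpi-1}, isolate the top summand $\ell=k$ (giving the $\eta_k$ and part of the inner sum), and then handle the remaining $\ell\le k-1$ by splitting $\eta'_\ell\rho_{k-\ell}$ into $\eta_\ell\rho_{k-\ell}$ plus a cross term, combining the cross term via the same multiplicativity of the $\lambda$-shapes used in Lemmas \ref{lem-1} and \ref{lem-2}. The paper's proof does exactly this bookkeeping, just with the summation index named $m$ and $N$ in place of your $\ell$ and $k$.
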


\begin{proof}
By \eqref{varpi-1}, \eqref{eta'-m} and \eqref{rho} we have
\begin{eqnarray}
\varpi_N
&=&
\sum_{m=0}^{N-1}\Big(\eta_m+\sum_{k=1}^{m-1}\eta_k\sum_{j=1}^{m-k}\frac{C_{mkj}}{\lambda_2^j}\sum_{{\mbox{\tiny$\begin{array}{c}
3\leq n_1,\ldots,n_j\leq 2n+3\\n_1+\cdots+n_j=m-k+2j \end{array}$}}}\lambda_{n_1}\cdots\lambda_{n_j}\Big)\label{var-6}\nonumber
\\
&&\times
\sum_{h=1}^{N-m}\frac{C_{N-m,h}}{\lambda_2^{h}}\sum_{{\mbox{\tiny$\begin{array}{c}
3\leq m_1,\ldots,m_h\leq 2n+3\\m_1+\cdots+m_j=N-m+2h\end{array}$}}}\lambda_{m_1}\cdots\lambda_{m_h}
\label{var-7}
\\
&&+
\eta_N+\sum_{k=1}^{N-1}\eta_k\sum_{j=1}^{N-k}\frac{C_{Nk j}}{\lambda_{2}^j}\sum_{{\mbox{\tiny$\begin{array}{c}
3\leq n_1,\ldots,n_j\leq 2n+3\\n_1+\cdots+n_j=N-k+2j \end{array}$}}}\lambda_{n_1}\cdots\lambda_{n_j},
\label{var-8}
\end{eqnarray}
where the first $k$ sum vanishes when $m\leq 1$. Note that \eqref{var-8} is contained in \eqref{varpi-5},
while $\sum_{m=0}^{N-1}\eta_m$ times \eqref{var-7} is also contained in \eqref{varpi-5}. The rest sum of
$\varpi_N$ equals
\begin{eqnarray*}
&&
\sum_{m=2}^{N-1}\sum_{k=1}^{m-1}\eta_k\sum_{j=1}^{m-k}\frac{C_{mkj}}{\lambda_2^j}\sum_{{\mbox{\tiny$\begin{array}{c}
3\leq n_1,\ldots,n_j\leq 2n+3\\n_1+\cdots+n_j=m-k+2j \end{array}$}}}\lambda_{n_1}\cdots\lambda_{n_j}
\\
&&\times
\sum_{h=1}^{N-m}\frac{C_{N-m,h}}{\lambda_2^{h}}\sum_{{\mbox{\tiny$\begin{array}{c}
3\leq m_1,\ldots,m_h\leq 2n+3\\m_1+\cdots+m_j=N-m+2h\end{array}$}}}\lambda_{m_1}\cdots\lambda_{m_h}
\\
&=&
\sum_{k=1}^{N-2}\eta_k\sum_{m=k+1}^{N-1}\sum_{i=1}^{N-k}\frac{1}{\lambda_{2}^i}
\sum_{{\mbox{\tiny$\begin{array}{c}
h,j\geq1\\h+j=i \end{array}$}}}C_{mkj}C_{N-m,h}
\sum_{{\mbox{\tiny$\begin{array}{c}
3\leq p_1,\ldots,p_i\leq 2n+3\\p_1+\cdots+p_i=N-k+2i\\p_1+\cdots +p_h=N-m+2h \end{array}$}}}\lambda_{p_1}\cdots\lambda_{p_i},
\end{eqnarray*}
which also fits \eqref{varpi-5}.
\end{proof}

In the following lemma we compute derivatives of $g(x)\frac{dx}{dy}$ which we will use to prove our main theorem.

\begin{lemma}\label{lem-2-4}
With the above notation we assume \eqref{U-f'} and \eqref{L-f''}. Then for $1\leq i\leq n+2$
\begin{eqnarray}\label{g-dx-1}
\frac{d^i}{dy^i}\Big(g(x)\frac{dx}{dy}\Big)=\sum_{k=0}^{2n-i}\frac{(k+i)!}{k!}\varpi_{k+i}y^k+O_n\Big(U|y|^{2n+1-i}\Big(\frac{1}{M^{2n+1}}+\frac{1}{N^{2n+1}}\Big)\Big).
\end{eqnarray}
\end{lemma}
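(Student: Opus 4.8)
The plan is to reduce the lemma to a bound of the correct \emph{shape}, using uniqueness of Taylor polynomials, and then to establish that bound by differentiating through the chain rule and feeding in the expansions already proved. For the reduction, observe first that by \eqref{thm3-1} one has $y=(x-\gamma)\sqrt{q(x)}$, where $q(x)=\frac1{\lambda_2}\int_0^1(1-u)f''(\gamma+u(x-\gamma))\,du$ is $C^{2n+1}$ with $q(\gamma)=1$; thus the substitution $x\mapsto y$ is $C^{2n+1}$ with nonvanishing derivative at $\gamma$, so $x=x(y)\in C^{2n+1}$ on $[-r,r]$ and $g(x(y))\frac{dx}{dy}\in C^{2n}$ there. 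By \eqref{g-dx/dy}, the degree-$2n$ Taylor polynomial of $g(x(y))\frac{dx}{dy}$ at $y=0$ equals $\sum_{k=0}^{2n}\varpi_k y^k$, whence for $1\le i\le n+2$ the function $\frac{d^i}{dy^i}\!\big(g(x)\frac{dx}{dy}\big)$ is $(2n-i)$ times differentiable at $0$ with Taylor polynomial $\sum_{k=0}^{2n-i}\frac{(k+i)!}{k!}\varpi_{k+i}y^k$. Hence it suffices to show that $\frac{d^i}{dy^i}\!\big(g(x)\frac{dx}{dy}\big)$ equals \emph{some} polynomial in $y$ of degree $\le 2n-i$ plus $O_n\big(U|y|^{2n+1-i}(M^{-(2n+1)}+N^{-(2n+1)})\big)$; uniqueness of Taylor polynomials then forces that polynomial to be the one claimed, and nothing about the coefficients need be verified by hand.

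To produce such an expression I would differentiate through the chain rule. Writing $g(x)\frac{dx}{dy}=\frac{d}{dy}\int_\gamma^{x(y)}g(t)\,dt$ and applying Fa\`{a} di Bruno's formula, $\frac{d^i}{dy^i}\!\big(g(x)\frac{dx}{dy}\big)$ becomes a finite sum of constant multiples of terms $g^{(p-1)}(x)\prod_{j=1}^{p}\frac{d^{\ell_j}x}{dy^{\ell_j}}$ with $p\ge1$, $\ell_j\ge1$, $\ell_1+\cdots+\ell_p=i+1$; every derivative occurring is legitimate since $x(y)\in C^{2n+1}$ and $i+1\le n+3\le 2n+1$ for $n\ge2$. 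Each factor then receives an honest $y$-expansion: $g^{(p-1)}(x(y))$ from \eqref{g^i} (which extends verbatim to the order $p-1=n+2$), while for $2\le\ell\le n+3$ one differentiates the identity $f'(x)\frac{dx}{dy}=2\lambda_2 y$ repeatedly and solves recursively, obtaining $\frac{d^\ell x}{dy^\ell}$ as a polynomial in $y$, $f'(x)^{-1}$, and $f''(x),\dots,f^{(\ell)}(x)$. Substituting the expansions of $f^{(j)}(x(y))$ from Lemma \ref{lem-1-1} and \eqref{f^i(x)-2}, together with $f'(x(y))^{-1}=(2\lambda_2 y)^{-1}\big(1+O_n(|y|/M)\big)$ expanded as a power series in $y$ (whose polar part must cancel, since $\frac{d^\ell x}{dy^\ell}$ is continuous at $0$), yields $\frac{d^\ell x}{dy^\ell}=\sum_{k=0}^{2n+1-\ell}c_k^{(\ell)}y^k+O_n\big(|y|^{2n+2-\ell}M^{-(2n+1)}\big)$ with $c_k^{(\ell)}\ll M^{-(k+\ell-1)}$; the case $\ell=1$ is \eqref{dx/dy}.

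Finally I would assemble: insert these expansions into the chain-rule formula and multiply out. Using $|y|\le r\le\Delta M$, absorb into the error every polynomial monomial of degree exceeding $2n-i$, as well as every product in which at least one factor contributes its remainder. The decisive arithmetic is that in each such contribution the exponents of $M$ and of $N$ together sum to exactly $2n+1$, and a surplus factor $|y|^{2n+1-i+s}$ is converted into $|y|^{2n+1-i}$ at the cost of $M^{s}$, which exactly matches the deficit; one is then left with terms $\ll U|y|^{2n+1-i}N^{-c}M^{-(2n+1-c)}$, $0\le c\le 2n+1$, and $N^{-c}M^{-(2n+1-c)}\le M^{-(2n+1)}+N^{-(2n+1)}$. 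This gives $\frac{d^i}{dy^i}\!\big(g(x)\frac{dx}{dy}\big)=R_i(y)+O_n\big(U|y|^{2n+1-i}(M^{-(2n+1)}+N^{-(2n+1)})\big)$ with $R_i$ the truncation to degree $\le 2n-i$ of the polynomial part, and by the first paragraph $R_i(y)=\sum_{k=0}^{2n-i}\frac{(k+i)!}{k!}\varpi_{k+i}y^k$.

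The step I expect to be the main obstacle is this assembly: one must carry the coefficient sizes $c_k^{(\ell)}\ll M^{-(k+\ell-1)}$ and ${\eta_k^{(i)}}'\ll U\big(N^{-(k+i)}+M^{-(k+i)}\big)$ through products of as many as $n+3$ factors, and check in each of the many resulting terms that the $M$- and $N$-exponents balance so that the clean bound $M^{-(2n+1)}+N^{-(2n+1)}$ survives. The regularity input and the chain-rule bookkeeping are routine; and, as noted, the identification of $R_i$ costs nothing, being dictated by the Taylor-uniqueness argument.
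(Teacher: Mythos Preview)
Your approach is essentially the same as the paper's. Both proofs first observe that the $\varpi_k$ are constants, so $\sum_{k\le 2n}\varpi_k y^k$ is the honest Taylor polynomial of $g(x)\frac{dx}{dy}$, and hence Taylor uniqueness pins down the polynomial part of every derivative; both then compute the $i$-th derivative directly (product/chain rule), insert the already-established $y$-expansions of $g^{(j)}(x)$, $f^{(j)}(x)$ and $\frac{d^{\ell}x}{dy^{\ell}}$, and read off the error. The paper treats $i=1$ by hand via $\frac{d}{dy}\big(g\,x'\big)=g'(x)(x')^2+g(x)x''$, deriving the expansion of $x''(y)$ from the identity $x''=\frac{1}{y}\big(x'-\frac{1}{2\lambda_2}(x')^3f''\big)$, and then asserts ``other cases are similar''; your Fa\`a di Bruno packaging via $g(x)\frac{dx}{dy}=\frac{d}{dy}\int_\gamma^{x(y)}g$ gives a uniform treatment of all $i$ at once and makes the regularity bookkeeping ($x(y)\in C^{2n+1}$, $i+1\le n+3\le 2n+1$ for $n\ge2$) explicit. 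The one place where your sketch is slightly more delicate than the paper's is the handling of the $1/f'$ pole when expanding $\frac{d^{\ell}x}{dy^{\ell}}$: you argue the polar part cancels because the result is known to be continuous, which is valid, but when actually bounding the remainder you must check that the \emph{error} terms multiplied against $1/f'\asymp (2\lambda_2 y)^{-1}$ do not introduce a spurious $|y|^{-1}$; the paper's rewriting $x''=\frac{1}{y}(\cdots)$ with the parenthesis manifestly $O(y)$ sidesteps this. That aside, the two arguments coincide.
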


\begin{proof}
By \eqref{varpi-5} we see that the expression of $\varpi_{k}$, $0\leq k\leq 2n$, only uses $\eta_{\ell}$, $0\leq \ell\leq 2n$, and hence it only uses $g^{(\ell)}(\gamma)$ for $0\leq \ell\leq 2n$ by \eqref{eta2}.
By the same \eqref{varpi-5}, $\varpi_{k}$, $0\leq k\leq 2n$, only requires $\lambda_{n_1},\ldots,\lambda_{n_j}$ for $n_1,\ldots,n_j\leq 2n+2$. Thus by \eqref{lam}, it only requires $f^{(\ell)}(\gamma)$, $\ell=2,\ldots,2n+2$. Consequently, $\varpi_{k}$ for $0\leq k\leq 2n$ are independent of $y$, and the terms
$\sum_{k=0}^{2n}\varpi_ky^k$ 
are the corresponding terms in the Taylor expansion of $g(x)\frac{dx}{dy}$.

This implies that
\begin{align}\label{N-1}
\frac{d^i}{dy^i}\Big(g(x)\frac{dx}{dy}\Big)=\sum_{k=0}^{2n-i}\frac{(k+i)!}{k!}\varpi_{k+i}y^k+R_i(y).
\end{align}
where $R_i(y)$ is the remainder term.
We want to show
\begin{align}\label{N-2}
R_i(y)\ll O_n\Big(U|y|^{2n+1-i}\Big(\frac{1}{M^{2n+1}}+\frac{1}{N^{2n+1}}\Big)\Big).
\end{align}
In the following we will only consider the case of $i=1$. Other cases are similar.

From \eqref{g^i} and \eqref{eta'-m-1}
\begin{align}\label{eta'-m-1-1}
\frac{dg}{dx}&
=\sum_{k=0}^{2n-1}{\eta_k^{(1)}}'y^k+O_n\Big(U|y|^{2n}\Big(\frac{1}{NM^{2n}}+\frac{1}{N^{2n+1}}\Big)\Big),
\end{align}
where
\begin{align*}
{\eta_k^{(1)}}'=\frac{(k+1)!}{k!}\eta_{k+1}+\sum_{m=1}^{k}\eta_m\sum_{j=1}^{k+1-m}\frac{C_{kmj}}{\lambda_2^j}
\sum_{{\mbox{\tiny$\begin{array}{c}
3\leq n_1,\ldots,n_j\leq 2n+3\\n_1+\cdots+n_j=k+1-m+2j \end{array}$}}}\lambda_{n_1}\cdots\lambda_{n_j}
\end{align*}
By \eqref{dx/dy-1} we see that
\begin{align*}
\frac{d^2x}{dy^2}=2\lambda_2\frac{1}{f'}-2\lambda_2y\frac{f''}{(f')^2}\frac{dx}{dy}=\frac{1}{y}\Big(\frac{dx}{dy}-\frac{1}{2\lambda_2}\Big(\frac{dx}{dy}\Big)^3f''\Big).
\end{align*}
From last equation, \eqref{f^i(x)-2} for $i=2$, and  \eqref{dx/dy} we see that $\frac{d^2x}{dy^2}$ can be expressed as  power series of $y$, i.e.
\begin{align}\label{N-3}
\frac{d^2x}{dy^2}=\sum_{k=0}^{2n-1}\rho_k^{(1)}y^{k}+O\Big(\frac{|y^{2n}|}{M^{2n+1}}\Big)
\end{align}
where
\begin{align*}\label{N-4}
\rho_k^{(1)}= \sum_{j=1}^{k+1}\frac{C_{kj}}{\lambda_2^{j}}\sum_{{\mbox{\tiny$\begin{array}{c}
3\leq n_1,\ldots,n_j\leq 2n+3\\n_1+\cdots+n_j=k+1+2j \end{array}$}}}\lambda_{n_1}\cdots\lambda_{n_j}.
\end{align*}

With these preparations, we compute
\begin{align*}
\frac{d}{dy}\Big(g(x)\frac{dx}{dy}\Big)=\frac{dg}{dx}\Big(\frac{dx}{dy}\Big)^2+g(x)\frac{d^2x}{dy^2}.
\end{align*}
By \eqref{eta'-m-1-1}, \eqref{dx/dy}, \eqref{g} and \eqref{N-3} we get
\begin{eqnarray}\label{N-5}
\frac{d}{dy}\Big(g(x)\frac{dx}{dy}\Big)
&=&\sum_{m=0}^{2n-1}\varpi^{(1)}_my^{m}+O_n\Big(U|y|^{2n}\Big(\frac{1}{M^{2n+1}}+\frac{1}{N^{2n+1}}\Big)\Big)
\end{eqnarray}
with
\begin{eqnarray*}
\varpi^{(1)}_m=\sum_{{\mbox{\tiny$\begin{array}{c}
k,\ell_1,\ell_2\geq0\\k+\ell_1+\ell_2=m \end{array}$}}}{\eta_k^{(1)}}'\rho_{\ell_1}\rho_{\ell_2}+
\sum_{{\mbox{\tiny$\begin{array}{c}
k,\ell\geq0\\k+\ell=m \end{array}$}}}{\eta_k^{(1)}}'\rho_{\ell}^{(1)}.
\end{eqnarray*}

For $0\leq m\leq 2n-1$, $\varpi^{(1)}_m$ as above involves $\eta_{\ell}$, $0\leq \ell\leq 2n$, and $\lambda_{n_1},\ldots,\lambda_{n_j}$ with $3\leq n_1,\ldots,n_j\leq 2n+2$, and hence is independent of $y$. Consequently, the terms for $0\leq m\leq2n-1$ in \eqref{N-5} are terms in the Taylor expansion of $\frac{d}{dy}\Big(g(x)\frac{dx}{dy}\Big)$. Comparing this with the Taylor terms in \eqref{N-1} for $i=1$, we conclude that for $0\leq k\leq 2n-1$ we have 
$(k+1)\varpi_{k+1}=\varpi^{(1)}_{k}$ 
by the uniqueness of Taylor expansions. Therefore we see that \eqref{N-2} holds for $i=1$.
\end{proof}

\section{Proof of Theorem \ref{WSPI}}

Recall that \eqref{u,v-range}, we have for $\alpha\leq x\leq u$
\begin{equation*}
|f^\prime(x)|\geq \frac{T|x-\gamma|}{C_2M^2}\geq \frac{T|u-\gamma|}{C_2M^2}\geq \frac{Tr}{C^{'}_2M^2}.
\end{equation*}
Therefore by Theorem 1.1 for $n+1$ we get
\begin{eqnarray}\label{error-r}
&&
\int_{\alpha}^{u}g(x)e(f(x))dx+\int_{v}^{\beta}g(x)e(f(x))dx
\\
&=&
\Big[e(f(x)) \sum_{i=1}^{n+1}H_{i}(x)\Big]_{\alpha}^{u}+\Big[e(f(x)) \sum_{i=1}^{n}H_{i}(x)\Big]_{v}^{\beta}
+O\Big(\frac{UM^{2n+5}}{NT^{n+2}r^{n+2}} \sum_{j=1}^{[\frac{n+1}{2}]}
\frac{1}{r^j}\sum_{t=j}^{n+1-j}\frac{1}{N^{n+1-j-t}M^{t}}\Big)\nonumber
\\
&&+
O\Big(\frac{UM^{2n+4}}{T^{n+2}r^{n+2}}\Big[ \frac{1}{N^{n+1}}\Big(\frac{M}{N}+1\Big)+\sum_{j=1}^{n+1}
\frac{1}{r^j}\sum_{t=0}^{n+1-j}\frac{1}{N^{n+1-j-t}M^{t}}\Big]\Big).
\nonumber
\end{eqnarray}
Then by \eqref{estimate-r}, the error terms in \eqref{error-r} 
are
\begin{eqnarray}\label{aab}
&&
O\Big(\frac{UM^{2n+4}}{T^{n+2}N^{n+1}}\Big(\frac{M}{N}+1\Big)
\Big(\frac{1}{(\gamma-\alpha)^{n+2}}
+\frac{1}{(\beta-\gamma)^{n+2}}\Big)\Big)
\\
&+&
O\Big(\frac{UM^{2n+4}}{T^{n+2}}\sum_{j=1}^{n+1}
\Big(\frac{1}{(\gamma-\alpha)^{2n+3}}+\frac{1}{(\beta-\gamma)^{2n+3}}\Big)\Big)
+
O\Big(\frac{U}{T^{n+1}}\Big(\frac{M^{2n+2}}{N^{2n+1}}+M\Big)\Big).
\nonumber
\end{eqnarray}

By \eqref{error-r} and \eqref{aab} we only need to consider 
the remaining integral 
\begin{eqnarray}\label{mainterm5}
\int_{u}^{v}g(x)e(f(x))dx
&=&
e(f(\gamma))\int_{-r}^{r}e(\lambda_2y^2)g(x)\frac{dx}{dy}dy
\\
&=&
e(f(\gamma))\sum_{k=0}^{2n}\varpi_k\int_{-r}^{r}e(\lambda_2y^2)y^kdy+e(f(\gamma))\int_{-r}^{r}e(\lambda_2y^2)Q(y)
dy,
\nonumber
\end{eqnarray}
with $Q(y)$ defined in \eqref{Qg-y}, 
because $\varpi_k$, $0\leq k\leq 2n$, is independent of $y$.
We will now compute the first integral on the right hand side 
of \eqref{mainterm5}.

\begin{lemma}\label{inteyk}
For even $k=2j$ define
\begin{eqnarray}\label{E-phi-1}
\phi_{t}^{(k)}(y)
&=&
(-1)^{t-1}\frac{(2j-1)\cdots (2j-(2t-3))}{(4\pi \lambda_2 i)^t}
y^{2j-(2t-1)} ,\ 1\leq t\leq j+1,
\\
\label{E-phi-2}
&=&
\frac{(-1)^{j}(2j-1)!!(2t-2j-3)!!}{(4\pi i\lambda_2)^ty^{2(t-j)-1}},\ j+2\leq t\leq n+1,
\end{eqnarray}
where the numerator equals 1 for $t=1$. Then 
\begin{equation}\label{y^k}
\int_{-r}^{r}e(\lambda_2 y^2)y^{k}dy
=\Big[e(\lambda_2 y^2) \sum_{i=1}^{j}\phi_{i}^{(k)}(y)\Big]_{-r}^{r}
+\frac{(-1)^{j}(2j-1)!!}{(4\pi i\lambda_2)^j} 
\int_{-r}^{r}e(\lambda_2 y^2)dy.
\end{equation}
\end{lemma}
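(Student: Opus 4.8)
The plan is to prove the identity by repeated integration by parts, peeling off one factor of $e(\lambda_2 y^2)$ at a time, and to verify by induction on $t$ that the antiderivatives produced at each stage are exactly the functions $\phi_t^{(k)}(y)$ recorded in \eqref{E-phi-1}--\eqref{E-phi-2}. The key observation is the elementary formula
\[
\frac{d}{dy}e(\lambda_2 y^2) = 4\pi i \lambda_2\, y\, e(\lambda_2 y^2),
\]
so that for any differentiable $\psi(y)$,
\[
\int_{-r}^{r} e(\lambda_2 y^2)\,\psi(y)\, y\, dy
= \Big[ e(\lambda_2 y^2)\frac{\psi(y)}{4\pi i \lambda_2}\Big]_{-r}^{r}
- \frac{1}{4\pi i \lambda_2}\int_{-r}^{r} e(\lambda_2 y^2)\,\psi'(y)\, dy.
\]
Applying this with $\psi(y) = y^{2j-1}$ gives the $t=1$ term $\phi_1^{(k)}(y) = y^{2j-1}/(4\pi i \lambda_2)$ as a boundary contribution, plus a new integral with integrand a constant multiple of $e(\lambda_2 y^2) y^{2j-2}$. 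One then repeats.

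First I would set up the induction cleanly. Write $k = 2j$ and claim that for each $m$ with $0 \le m \le j$,
\[
\int_{-r}^{r} e(\lambda_2 y^2) y^{2j}\, dy
= \Big[ e(\lambda_2 y^2)\sum_{i=1}^{m}\phi_i^{(k)}(y)\Big]_{-r}^{r}
+ c_m \int_{-r}^{r} e(\lambda_2 y^2) y^{2(j-m)}\, dy,
\]
where $c_m = (-1)^m (2j-1)(2j-3)\cdots(2j-2m+1)/(4\pi i \lambda_2)^m$ (an empty product being $1$). The base case $m=0$ is trivial. For the inductive step, apply the integration-by-parts formula above to $\int e(\lambda_2 y^2) y^{2(j-m)}\,dy = \int e(\lambda_2 y^2) y^{2(j-m)-1}\cdot y\, dy$ with $\psi(y) = y^{2(j-m)-1}$; the boundary term contributes $c_m \cdot y^{2(j-m)-1}/(4\pi i \lambda_2)$, which must be checked to equal $\phi_{m+1}^{(k)}(y)$, and the leftover integral has integrand $-c_m (2(j-m)-1)/(4\pi i \lambda_2) \cdot e(\lambda_2 y^2) y^{2(j-m-1)}$, giving $c_{m+1}$. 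Taking $m=j$ yields exactly \eqref{y^k}, since $c_j = (-1)^j(2j-1)!!/(4\pi i \lambda_2)^j$ and $y^{2(j-j)} = 1$.

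The only genuine content is the bookkeeping that the boundary term at stage $m+1$ matches the stated $\phi_{m+1}^{(k)}$. For $1 \le m+1 \le j+1$ one checks directly that $c_m/(4\pi i \lambda_2) = (-1)^m (2j-1)\cdots(2j-2m+1)/(4\pi i\lambda_2)^{m+1}$ agrees with the coefficient in \eqref{E-phi-1} after reindexing $t = m+1$ (so $(-1)^{t-1}$ and the product $(2j-1)\cdots(2j-(2t-3))$ with $2t-3 = 2m-1$). The piece \eqref{E-phi-2}, valid for $j+2 \le t \le n+1$, is not actually used in the present lemma --- it describes the $\phi_t^{(k)}$ for the later application to small powers where one integrates by parts in the opposite direction (raising rather than lowering the exponent of $y$), and I would simply note that \eqref{y^k} only invokes $\phi_i^{(k)}$ for $1 \le i \le j$, so the two-line case split in the definition is there for future reference and the proof of \eqref{y^k} needs only the first line.

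**Main obstacle.** There is no real obstacle: the proof is a finite induction driven by one integration-by-parts identity. The main thing to be careful about is the index arithmetic relating the running product $(2j-1)(2j-3)\cdots$ to the closed form $(2t-3)!!$-type expression in \eqref{E-phi-1}, and tracking signs of $(-1)^{t-1}$ versus $(4\pi i \lambda_2)^t$ through each step; getting any of these off by one would corrupt all later terms. I would write out the first two steps explicitly to pin down the convention, then run the induction.
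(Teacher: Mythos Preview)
Your proposal is correct and follows essentially the same approach as the paper: repeated integration by parts $j$ times, tracking the boundary terms and the residual constant $c_j=(-1)^j(2j-1)!!/(4\pi i\lambda_2)^j$. The paper compresses the bookkeeping by first recording the recursion $\phi_1^{(k)}(y)=y^{k-1}/(4\pi i\lambda_2)$, $\phi_t^{(k)}(y)=-(\phi_{t-1}^{(k)}(y))'/(4\pi i\lambda_2\, y)$, from which the boundary terms fall out automatically, and then observing that $(\phi_j^{(k)}(y))'$ is the constant $(-1)^{j-1}(2j-1)!!/(4\pi i\lambda_2)^j$; your explicit induction on $m$ with the constants $c_m$ is the unrolled version of the same computation. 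Your remark that the second branch \eqref{E-phi-2} of the definition is not needed for \eqref{y^k} is also correct.
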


\begin{proof}
By \eqref{E-phi-1} and \eqref{E-phi-2} we know 
\begin{eqnarray}\label{N-T-2}
\phi_1^{(k)}(y)
=
\frac{y^{k-1}}{4\pi \lambda_2 i },\ \phi_t^{(k)}(y)=-\frac{(\phi_{t-1}^{(k)}(y))'}{4\pi \lambda_2 iy}\ \text{for $2\leq t\leq n+1$}
\end{eqnarray}
for $2n\geq k$. Applying integration by parts $j$ times we get
\begin{eqnarray}\label{Fst}
\int_{-r}^{r}e(\lambda_2 y^2)y^kdy
&=&
\Big[e(\lambda_2 y^2) \sum_{i=1}^{j}\phi_{i}^{(k)}(y)\Big]_{-r}^{r}-
\int_{-r}^{r}e(\lambda_2y^2)(\phi_{j}^{(k)}(y))'dy.
\end{eqnarray}
From \eqref{E-phi-1} for $t=j$ we get 
\begin{eqnarray}\label{E-phi-3}
\phi_{j}^{(k)}(y)=(-1)^{j-1}\frac{(2j-1)!!  y}{(4\pi \lambda_2 i)^j},
\ (\phi_{j}^{(k)}(y))'=(-1)^{j-1}\frac{(2j-1)!!}{(4\pi \lambda_2 i)^j}.
\end{eqnarray}
Substituting \eqref{E-phi-3} into \eqref{Fst} we prove 
\eqref{y^k}.
\end{proof}

The integral on the right side of \eqref{y^k} can be expressed in terms of the probability integral (cf. Gradshteyn
and Ryzhik \cite{GradRzhk} 8.251.1)
\begin{align*}
\Phi(x)=\frac{1}{\sqrt{\pi}}\int_{0}^{x^2}\frac{e^{-t}}{\sqrt{t}}dt.
\end{align*}
In fact,
\begin{equation}\label{n-1}
\int_{-r}^{r}e(\lambda_2 y^2)dy
=\frac{2}{\sqrt{2\pi\lambda_2} }\int_{0}^{x}e^{it^2}dt
=\frac{e(\frac{1}{8})}{\sqrt{2\lambda_2}}
\Phi\Big(\frac{x}{e(1/8)}\Big)
\end{equation}
for $x=r\sqrt{2\pi\lambda_2}$, by \cite{GradRzhk} 8.256.1. 
An asymptotic expansion of \eqref{n-1} is given by
\cite{GradRzhk} 8.254. Therefore
\begin{eqnarray}\label{n-2}
\int_{-r}^{r}e(\lambda_2 y^2)dy
&=&
\frac{e(\frac{1}{8})}{\sqrt{2\lambda_2}}\Big(1-\frac{e^{ix^2}e(\frac{1}{8})}{x\sqrt{\pi}}\Big[\sum_{k=0}^{d}(-1)^k\frac{(2k-1)!!}{(2x^2/i)^k}+O(x^{-2d-2})\Big]\Big)
\\
&=&
\frac{e(\frac{1}{8})}{\sqrt{f''(\gamma)}}+\frac{e(\lambda_2 r^2)}{2\pi i\lambda_2r}\Big(1+\sum_{k=2}^d\frac{(2k-3)!!}{(4\pi i\lambda_2)^{k-1}r^{2k-2}}\Big)
+O\Big(\frac{1}{\lambda_2^{d+1}r^{2d+1}}\Big).
\nonumber
\end{eqnarray}
By \eqref{L-f''} we see that the error term in \eqref{n-2} is
\begin{align}\label{n-3}
O\Big(\frac{M^{2d+2}}{T^{d+1}r^{2d+1}}\Big).
\end{align}

Substituting \eqref{n-2} and \eqref{n-3} into \eqref{y^k}, we get for $k=2j$
\begin{eqnarray*}
&&
\int_{-r}^{r}e(\lambda_2 y^2)y^kdy
\\
&=&
\frac{(-1)^{j}(2j-1)!!}{(4\pi i\lambda_2)^j}\frac{e(\frac{1}{8})}{\sqrt{f''(\gamma)}}
\nonumber
+
2e(\lambda_2 r^2)\frac{(-1)^{j}(2j-1)!!}{(4\pi i\lambda_2)^j}\frac{1}{4\pi i\lambda_2r}\Big(1+\sum_{k=2}^d\frac{(2k-3)!!}{(4\pi i\lambda_2)^{k-1}r^{2k-2}}\Big)
\nonumber
\\
&&+
\Big[e(\lambda_2 y^2) \sum_{i=1}^{j}\phi_{i}^{(k)}(y)\Big]_{-r}^{r}
+O\Big(\frac{M^{2d+2j+2}}{T^{d+j+1}r^{2d+1}}\Big).
\nonumber
\end{eqnarray*}
By \eqref{E-phi-1} and \eqref{E-phi-2}  we see that the second term  is
exactly equal to $\sum_{i=j+1}^{j+d}\phi_{i}^{(k)}(r)$.
Therefore
\begin{eqnarray}\label{n-4-2}
\int_{-r}^{r}e(\lambda_2 y^2)y^kdy
&=&
\frac{(-1)^{j}(2j-1)!!}{(4\pi i\lambda_2)^j}\frac{e(\frac{1}{8})}{\sqrt{f''(\gamma)}}
+\Big[e(\lambda_2 y^2) \sum_{i=1}^{j+d}\phi_{i}^{(k)}(y)\Big]_{-r}^{r}
\\
&&+
O\Big(\frac{M^{2d+2j+2}}{T^{d+j+1}r^{2d+1}}\Big).
\nonumber
\end{eqnarray}
For $j\leq n+1$,  we take $d=n+1-j$ in \eqref{n-4-2}. 

Now we need a bound for $\varpi_k$:  
\begin{equation}\label{varpi1}
\varpi_k
\ll
\sum_{l=0}^{k}\frac{U}{N^l}\sum_{j=1}^{k-l}\frac{M^{2j}}{T^j}\frac{T^j}{M^{k-l+2j}}
\ll
\frac{U}{M^k}\sum_{l=0}^{k}\Big(\frac{M}{N}\Big)^{l}
\ll
U\Big(\frac{1}{N^{k}}+\frac{1}{M^{k}}\Big)\ \text{for}\ 1\leq k\leq 2n
\end{equation}
by \eqref{varpi-5} and \eqref{U-f'}--\eqref{U-g}. 
Then by \eqref{varpi1} and \eqref{n-4-2} we have
\begin{eqnarray}\label{End-y^k}
\sum_{k=0}^{2n}\varpi_k\int_{-r}^{r}e(\lambda_2y^2)y^kdy
&=&
\frac{e(\frac{1}{8})}{\sqrt{f''(\gamma)}}\sum_{j=0}^{n}\varpi_{2j}\frac{(-1)^{j}(2j-1)!!}{(4\pi i\lambda_2)^j}+e(\lambda_2r^2)\Big[\sum_{k=0}^{2n}\varpi_{k}\sum_{i=1}^{n+1}\phi_{i}^{(k)}(y)\Big]_{-r}^{r}
\\
&&+
O\Big(\sum_{j=0}^{n}U\Big(\frac{M}{N}+1\Big)^{2j}\frac{M^{2n-2j+4}}{T^{n+2}r^{2n-2j+3}}\Big),
\nonumber
\end{eqnarray}
where we added in the terms for odd $k$ which are zero anyway.

Let us turn to the second integral on the right hand side of \eqref{mainterm5}.

\begin{lemma}\label{int-r+rO}
With $Q(y)$ as in \eqref{Qg-y} define 
\begin{equation}\label{N-T-Q-1}
\psi_{1}(y)=\frac{Q(y)}{(4\pi i\lambda_2)y}
\ \ \text{and}\ \ 
\psi_{k}(y)=-\frac{\psi_{k-1}^\prime(y)}{(4\pi i\lambda_2)y}
\ \ \text{for}\ \ 2\leq k\leq n+3.
\end{equation}
Then 
\begin{equation}\label{End-Q}
\int_{-r}^{r}Q(y)e(\lambda_2 y^2)dy
=
\Big[e(\lambda_2 y^2) \sum_{k=1}^{n+1}\psi_{k}(y)\Big]_{-r}^{r}
+
O\Big(\frac{U}{T^{n+1}}\Big(\frac{M^{2n+2}}{N^{2n+1}}+M\Big)\Big).
\end{equation}
\end{lemma}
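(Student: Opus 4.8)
The plan is to prove \eqref{End-Q} by integrating by parts against the oscillating factor and then controlling the leftover oscillatory integral by a dyadic decomposition around the origin. Write $e(\lambda_2y^2)=\bigl(4\pi i\lambda_2 y\bigr)^{-1}\tfrac{d}{dy}e(\lambda_2y^2)$ for $y\neq0$. By Lemma \ref{lem-2-4} (for $i=0$ this is \eqref{g-dx/dy}) one has $Q^{(i)}(y)=O_n\!\bigl(U|y|^{2n+1-i}(1/M^{2n+1}+1/N^{2n+1})\bigr)$ for $0\le i\le n+2$, so the recursion \eqref{N-T-Q-1} gives the pointwise bounds
\[
\bigl|\psi_k^{(j)}(y)\bigr|\;\ll\;\frac{U\,|y|^{\,2(n-k+1)-j}}{|\lambda_2|^{k}}\Bigl(\frac1{M^{2n+1}}+\frac1{N^{2n+1}}\Bigr)\qquad\text{for }j+k\le n+3 .
\]
Fix $\delta\asymp M/T^{1/2}$ and split $[-r,r]$ at $\pm\delta$. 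If $r<\delta$ the lemma is immediate, since then $r^{2n+2}\ll\delta^{2n+2}\ll M^{2n+2}/T^{n+1}$ forces both $\int_{-r}^{r}|Q|$ and every $|\psi_k(\pm r)|$ to be of the stated error size. Assume $r\ge\delta$. Integrate by parts $n+1$ times on each of $[\delta,r]$ and $[-r,-\delta]$; every step (using $\psi_k=-\psi_{k-1}'/(4\pi i\lambda_2 y)$) sheds a boundary term $[e(\lambda_2y^2)\psi_k(y)]$ evaluated at the endpoints and turns the integrand into $-\psi_k'(y)e(\lambda_2y^2)$. The boundary values at $\pm r$ combine into $\bigl[e(\lambda_2y^2)\sum_{k=1}^{n+1}\psi_k(y)\bigr]_{-r}^{r}$; those at $\pm\delta$ are $\ll\sum_{k\le n+1}|\psi_k(\pm\delta)|\ll\frac{U}{T^{n+1}}\bigl(\frac{M^{2n+2}}{N^{2n+1}}+M\bigr)$ by the displayed bound together with $|\lambda_2|\asymp T/M^{2}$ and $\delta\asymp M/T^{1/2}$; and there remain the inner integral $\int_{-\delta}^{\delta}Q(y)e(\lambda_2y^2)\,dy$ and the outer remainder $\int_{\delta\le|y|\le r}\psi_{n+1}'(y)e(\lambda_2y^2)\,dy$.

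The inner integral is $\ll\int_{-\delta}^{\delta}|Q|\ll U\delta^{2n+2}(1/M^{2n+1}+1/N^{2n+1})\ll\frac{U}{T^{n+1}}\bigl(\frac{M^{2n+2}}{N^{2n+1}}+M\bigr)$, again because $\delta\asymp M/T^{1/2}$. For the outer remainder, cut $\{\delta\le|y|\le r\}$ into the $O(\log(r/\delta))$ dyadic annuli $|y|\asymp R$, $R=2^{-m}r\ge\delta$. On each annulus the phase $\lambda_2y^2$ has $f''(\gamma)=2\lambda_2\gg T/M^{2}$ and no stationary point, while $\psi_{n+1}'$ and $\psi_{n+1}''$ are under control (they involve $Q^{(i)}$ only for $i\le n+2$); hence the second derivative test of Huxley \cite{Hxly} bounds the annulus contribution by $\ll\bigl(\sup|\psi_{n+1}'|+\operatorname{Var}\psi_{n+1}'\bigr)/\sqrt{|\lambda_2|}\ll\frac{U M^{2n+3}}{R\,T^{n+3/2}}(1/M^{2n+1}+1/N^{2n+1})$, the supremum and total variation being over the annulus. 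Summing the geometric series over $\delta\le R\le r$ gives $\ll\frac{U M^{2n+3}}{\delta\,T^{n+3/2}}(1/M^{2n+1}+1/N^{2n+1})\ll\frac{U}{T^{n+1}}\bigl(\frac{M^{2n+2}}{N^{2n+1}}+M\bigr)$. Collecting the boundary terms, the inner integral and the outer remainder proves \eqref{End-Q}.

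The delicate point is this last estimate. Each integration by parts divides by $y$, so $\psi_{n+1}'$ genuinely has size $|y|^{-1}$ near the origin although $Q$ vanishes there to order $2n+1$; neither the trivial bound nor the second derivative test applied uniformly on $[-r,r]$ converges, and the threshold $\delta\asymp M/T^{1/2}$ is precisely the value equating the trivial estimate on $|y|<\delta$ with the oscillatory estimate on $|y|>\delta$. One must also verify that no annulus forces a derivative of $Q$ past $Q^{(n+2)}$: the variation term $\operatorname{Var}\psi_{n+1}'$ costs $\psi_{n+1}''$, hence $Q^{(n+2)}$, which is exactly the last derivative Lemma \ref{lem-2-4} supplies under the hypotheses $f\in C^{2n+3}$, $g\in C^{2n+1}$, with no slack. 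Finally, the $1/M^{2n+1}$ and $1/N^{2n+1}$ parts of the bound on $Q^{(i)}$ must be carried separately in order to recover both summands $M^{2n+2}/N^{2n+1}$ and $M$ in the error.
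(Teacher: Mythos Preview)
Your proof is correct and follows essentially the same route as the paper: split at $\pm\delta\asymp\lambda_2^{-1/2}$, integrate by parts $n+1$ times on the outer pieces to produce the boundary terms $\sum_{k=1}^{n+1}\psi_k$, and control the remaining oscillatory integral via a dyadic decomposition. The only tactical differences are that the paper bounds the inner integral $\int_{-\delta}^{\delta}Q\,e(\lambda_2y^2)\,dy$ by the second derivative test rather than the trivial bound (yours is simpler and gives the same result), and on the dyadic annuli the paper applies the first derivative test to the remainder $\int\psi_{n+1}'e(\lambda_2y^2)\,dy$ rather than the second derivative test; since $|2\lambda_2 y|\ge\sqrt{\lambda_2}$ for $|y|\ge\delta$, either test yields the required bound after summing the geometric series.
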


\begin{proof}
From \eqref{Qg-y} and \eqref{g-dx/dy}  we get
\begin{align}\label{Up-Q}
Q(y)
\ll_n U|y|^{2n+1}\Big(\frac{1}{N^{2n+1}}+\frac{1}{M^{2n+1}}\Big).
\end{align}
 Similarly by \eqref{Qg-y} and \eqref{g-dx-1} we see that for $1\leq t\leq n+2$
\begin{align*}
Q^{(t)}(y)=\frac{d^t}{dy^t}\Big(g(x)\frac{dx}{dy}\Big)-\sum_{k=0}^{2n-t}\frac{(k+t)!}{k!}\varpi_{k+t}y^k.
\end{align*}
Therefore by \eqref{g-dx-1} for $1\leq t\leq n+2$
\begin{align}\label{Up-Q'}
Q^{(t)}(y)
&\ll U|y|^{2n+1-t}\Big(\frac{1}{N^{2n+1}}+\frac{1}{M^{2n+1}}\Big).
\end{align}

Next we choose a real number $\delta\asymp\lambda_2^{-1/2}$ such that $r/\delta$ is a power of $2$. The total variation of 
$Q(y)$ on $[-\delta,\delta]$ is 
\begin{align}\label{V-Q}
V(Q(y))
\ll
\int_{-\delta}^{\delta}\Big|\frac{dQ}{dy}\Big|dy
\ll
U|\delta|^{2n+1}\Big(\frac{1}{N^{2n+1}}+\frac{1}{M^{2n+1}}\Big).
\end{align}
From \eqref{Qg-y}, \eqref{Up-Q}, \eqref{V-Q},
by the Second Derivative Test (see Lemma 5.1.3 of 
\cite{Hxly}, p.88), we have
\begin{eqnarray}\label{delta}
\int_{-\delta}^{\delta}Q(y)e(\lambda_2 y^2)dy
&\ll&
\frac{\max\limits_{-\delta\leq y\leq \delta}|Q(y)|+V(Q(y))}{\sqrt{\lambda_2}}
\ll \frac{U|\delta|^{2n+1}}{\sqrt{\lambda_2}}\Big(\frac{1}{N^{2n+1}}+\frac{1}{M^{2n+1}}\Big)\\
&\ll& \frac{U}{T^{n+1}}\Big(\frac{M^{2n+2}}{N^{2n+1}}+M\Big).\nonumber
\end{eqnarray}

We split the range $\delta\leq y\leq r,-r\leq y\leq-\delta$ into intervals of the form
$t\leq y\leq2t,-2t\leq y\leq-t $. By integration by parts we have
\begin{align}\label{Q1}
&\int_{t}^{2t}Q(y)e(\lambda_2 y^2)dy=\Big[e(\lambda_2 y^2) \sum_{i=1}^{n+1}\psi_{i}(y)\Big]_{t}^{2t}-\int_{t}^{2t}e(\lambda_2 y^2)\psi_{n+1}^\prime(y)dy.
\end{align}
From \eqref{Up-Q'} we see that for $1\leq k\leq n+3$
\begin{eqnarray}\label{pi}
\psi_k(y)
&=&
\sum_{i=0}^{k-1}c_{ki}\frac{Q^{(i)}(y)}{(4\pi i\lambda_2)^ky^{2k-1-i}}\ll\frac{U|y|^{2n+2-2k}}{\lambda_2^k}\Big(\frac{1}{M^{2n+1}}+\frac{1}{N^{2n+1}}\Big)
\\
&\ll&
\frac{Ut^{2n+2-2k}}{\lambda_2^k}\Big(\frac{1}{M^{2n+1}}+\frac{1}{N^{2n+1}}\Big),
\nonumber
\end{eqnarray}
since $t\leq |y|\leq 2t$.
Hence
\begin{equation*}
\psi_{n+1}'(y)=-4\pi i\lambda_2 y\psi_{n+2}(y)\ll \frac{U}{\lambda_2^{n+1}t}\Big(\frac{1}{M^{2n+1}}+\frac{1}{N^{2n+1}}\Big),
\end{equation*}
and the total variation of $\psi_{n+1}'(y)$ on $[t,2t]$ is
\begin{eqnarray*}
V(\psi_{n+1}'(y))
&=&
\int_{t}^{2t}|\psi_{n+1}''(y)|dy=\int_{t}^{2t}|-4\pi i\lambda_2\psi_{n+2}(y)+(4\pi i\lambda_2 y)^2\psi_{n+3}(y)|dy
\\
&\ll&
\frac{U}{\lambda_2^{n+1}t}\Big(\frac{1}{M^{2n+1}}+\frac{1}{N^{2n+1}}\Big).
\end{eqnarray*}
By the First Derivative Test (Lemma 5.1.2 of \cite{Hxly}, p.88) we get
$$
\int_{t}^{2t}e(\lambda_2 y^2)\psi_{n}^\prime(y)dy
\ll
\frac{\max\limits_{t\leq y\leq 2t}\{\psi_n'(y)\}+V(\psi_n'(y))}{\lambda_2t}
\ll
\frac{U}{\lambda_2^{n+2}t^2}\Big(\frac{1}{N^{2n+1}}+\frac{1}{M^{2n+1}}\Big).
$$
Note that $\lambda_2\gg T/M^2$ and by \eqref{Q1} we get
$$
\int_{t}^{2t}Q(y)e(\lambda_2 y^2)dy
=
\Big[e(\lambda_2 y^2) \sum_{k=1}^{n+1}\psi_{k}(y)\Big]_{t}^{2t}
+
O\Big(\frac{UM^2}{T^{n+2}t^2}\Big(\frac{M^{2n+2}}{N^{2n+1}}+M\Big)\Big).
$$

Summing over ranges with $t=2^k\delta,k=0,1,2,\ldots$, we get
\begin{eqnarray}\label{pi3}
\int_{\delta}^{r}Q(y)e(\lambda_2 y^2)dy
&=&
\Big[e(\lambda_2 y^2) \sum_{k=1}^{n+1}\psi_{k}(y)\Big]_{\delta}^{r}
+O\Big(\frac{UM^2}{T^{n+2}\delta^2}\Big(\frac{M^{2n+2}}{N^{2n+1}}+M\Big)\sum_{k\geq 1}
\frac{1}{2^{2k}}\Big)
\\
&=&
\Big[e(\lambda_2 y^2) \sum_{k=1}^{n}\psi_{k}(y)\Big]_{\delta}^{r}+O\Big(\frac{U}{T^{n+1}}\Big(\frac{M^{2n+2}}{N^{2n+1}}+M\Big)\Big).
\nonumber
\end{eqnarray}
Similarly
\begin{equation}\label{pi4}
\int_{-r}^{-\delta}Q(y)e(\lambda_2 y^2)dy
=
\Big[e(\lambda_2 y^2) \sum_{k=1}^{n}\psi_{k}(y)\Big]_{-r}^{-\delta}
+O\Big(\frac{U}{T^{n+1}}\Big(\frac{M^{2n+2}}{N^{2n+1}}+M\Big)\Big).
\end{equation}
Now by \eqref{pi3}, \eqref{pi4} and \eqref{delta} we see that
\begin{eqnarray}\label{intQ3terms}
\int_{-r}^{r}Q(y)e(\lambda_2 y^2)dy
&=&
\Big[e(\lambda_2 y^2) \sum_{k=1}^{n}\psi_{k}(y)\Big]_{-r}^{r}
-
\Big[e(\lambda_2 y^2) \sum_{k=1}^{n}\psi_{k}(y)\Big]_{-\delta}^{\delta}
\\
&&+
O\Big(\frac{U}{T^{n+1}}\Big(\frac{M^{2n+2}}{N^{2n+1}}+M\Big)\Big).
\nonumber
\end{eqnarray}
By \eqref{pi} we have the following trivial estimates
\begin{eqnarray}\label{2ndterm<<}
\Big[e(\lambda_2 y^2) \sum_{k=1}^{n+1}\psi_{k}(y)\Big]_{-\delta}^{\delta}
&\ll&
\sum_{k=1}^{n+1}\frac{U\delta^{2n+2-2k}}{\lambda_2^k}\Big(\frac{1}{M^{2n+1}}+\frac{1}{N^{2n+1}}\Big)
\\
&\ll&
\frac{U}{T^{n+1}}\Big(\frac{M^{2n+2}}{N^{2n+1}}+M\Big).
\nonumber
\end{eqnarray}
Therefore by \eqref{intQ3terms} and \eqref{2ndterm<<} we 
prove the lemma.
\end{proof}

Now by \eqref{mainterm5}, \eqref{End-y^k}, \eqref{End-Q}  we have
\begin{eqnarray}\label{u-v}
&&
\int_{u}^{v}g(x)e(f(x))dx
\\
&=&
e(f(\gamma))\frac{e(\frac{1}{8})}{\sqrt{f''(\gamma)}}\sum_{j=0}^{n}\varpi_{2j}\frac{(-1)^{j}(2j-1)!!}{(4\pi i\lambda_2)^j}
\nonumber
\\
&&+
e(f(\gamma))\Big[e(\lambda_2y^2)\sum_{k=0}^{2n}\varpi_{k}\sum_{i=1}^{n+1}\phi_{i}^{(k)}(y)\Big]_{-r}^{r}
+e(f(\gamma))\Big[e(\lambda_2 y^2) \sum_{k=1}^{n+1}\psi_{k}(y)\Big]_{-r}^{r}
\nonumber
\\
&&+
O\Big(\sum_{j=0}^{n}U\Big(\frac{M}{N}+1\Big)^{2j}\frac{M^{2n-2j+4}}{T^{n+2}r^{2n-2j+3}}\Big)
+
O\Big(\frac{U}{T^{n+1}}\Big(\frac{M^{2n+2}}{N^{2n+1}}+M\Big)\Big).
\nonumber
\end{eqnarray}
By  \eqref{estimate-r}, the first $O-$term in \eqref{u-v} is
\begin{eqnarray}\label{u-v-end}
&\ll&
\frac{U}{T^{n+1}}\Big(\frac{M^{2n+1}}{N^{2n}}+M\Big)
+\frac{UM^{2n+4}}{T^{n+2}}\Big(\frac{1}{(\gamma-\alpha)^{2n+3}}+\frac{1}{(\beta-\gamma)^{2n+3}}\Big)
\\
&&+
\frac{UM^{2n+4}}{T^{n+2}N^{2n}}\Big(\frac{1}{(\gamma-\alpha)^{3}}+\frac{1}{(\beta-\gamma)^{3}}\Big).
\nonumber
\end{eqnarray}

Let
\begin{align*}
&F(y)=f(\gamma)+\lambda_2y^2=f(x),\ G(y)=g(x)\frac{dx}{dy},\\
&\theta_1(y)=\frac{G(y)}{2\pi iF^\prime(y)},\ \theta_i(y)=-\frac{\theta^{\prime}_{i-1}(y)}{2\pi iF^\prime(y)}\ \text{for}\ 2\leq i\leq n+1.
\end{align*}
Now we want to show the following two equalities,
\begin{eqnarray}\label{N-T-5}
\Big[e(F(y)) \sum_{i=1}^{n+1}\theta_{i}(y)\Big]_{-r}^{r}
&=&
e(f(\gamma))\Big[e(\lambda_2y^2)\sum_{k=0}^{2n}\varpi_{k}\sum_{i=1}^{n+1}\phi_{i}^{(k)}(y)\Big]_{-r}^{r}
\\
&&+
e(f(\gamma))\Big[e(\lambda_2 y^2) \sum_{k=1}^{n+1}\psi_{k}(y)\Big]_{-r}^{r}
\nonumber
\\
\label{N-T-6}
\Big[e(F(y)) \sum_{i=1}^{n+1}\theta_{i}(y)\Big]_{-r}^{r}
&=&
\Big[e(f(x)) \sum_{i=1}^{n+1}H_{i}(x)\Big]_{u}^{v},
\end{eqnarray}
where $H_{i}(x)$ is defined in \eqref{the2-1}. By 
\eqref{Qg-y} we see that
$$
G(y)=\sum_{k=0}^{2n}\varpi_ky^k+Q(y),
\hspace{3mm}
\theta_1(y)=\sum_{k=0}^{2n}\varpi_k\frac{y^{k-1}}{4\pi i\lambda_2}+\frac{Q(y)}{4\pi i\lambda_2y}.
$$
By \eqref{N-T-2} and \eqref{N-T-Q-1}  we see that for $1\leq i\leq n+1$
\begin{align*}
\theta_i(y)=\sum_{k=0}^{2n}\varpi_k\phi_{i}^{(k)}(y)+\psi_{i}(y).
\end{align*}
Therefore \eqref{N-T-5} is true.

To prove \eqref{N-T-6} we see that
\begin{align*}
F^\prime(y)=f^\prime(x)\frac{dx}{dy}\ \text{and}\ G(y)=g(x)\frac{dx}{dy},
\end{align*}
and hence
\begin{align*}
\theta_1(y)=\frac{G(y)}{2\pi iF^\prime(y)}=\frac{g(x)}{2\pi if^\prime(x)}=H_{1}(x),
\end{align*}
\begin{align*}
\theta_i(y)=-\frac{\theta^\prime_{i-1}(y)}{2\pi iF^\prime(y)}=-\frac{H'_{i-1}(x)}{2\pi if^\prime(x)}=H_{i}(x)\ \text{for}\ 2\leq i\leq n+1.
\end{align*}
Now by induction  \eqref{N-T-6} follows from the last two formulas and the correspondence between  $y=r$ and $x=v$, and between $y=-r$ and $x=u$.

Then we can conclude from \eqref{u-v}, \eqref{u-v-end}, 
\eqref{N-T-5} and \eqref{N-T-6} that 
\begin{eqnarray}\label{end}
\int_{u}^{v}g(x)e(f(x))dx
&=&
\frac{e\Big(f(\gamma)+\frac{1}{8}\Big)}{\sqrt{f''(\gamma)}}\Big(g(\gamma)+\sum_{j=1}^{n}\varpi_{2j}\frac{(-1)^{j}(2j-1)!!}{(4\pi i\lambda_2)^j}
\Big)
\\
&&+\Big[e(f(x)) \sum_{i=1}^{n+1}H_{i}(x)\Big]_{u}^{v}+O\Big(\frac{U}{T^{n+1}}\Big(\frac{M^{2n+2}}{N^{2n+1}}+M\Big)\Big)\nonumber
\\
&&+U\Big(\frac{M}{N}+1\Big)^{2n}\frac{M^{2n+4}}{T^{n+2}}\Big(\frac{1}{(\gamma-\alpha)^{2n+3}}+\frac{1}{(\beta-\gamma)^{2n+3}}\Big).
\nonumber
\end{eqnarray}

At last, we consider the whole integral \eqref{Ige(f)}. 
Since
$\int_\alpha^\beta=\int_\alpha^u+\int_u^v+\int_v^\beta$,
then by \eqref{aab} and  \eqref{end}, we can prove  
\eqref{thm3}.
\qed

{\it Acknowledgments.} The authors would like to thank a 
referee for a detailed reading and thoughtful comments. 
Gratitude is also due to Xiumin Ren who gave the authors 
many helpful suggestions. 

\bibliographystyle{amsplain}

\end{document}